\newtheorem{theorem}{Theorem}[section]
\newtheorem{lemma}[theorem]{Lemma}
\newtheorem{problem}{Problem}[section]
\newtheorem{proposition}{Proposition}[section]
\theoremstyle{definition}
\theoremstyle{remark}
\newtheorem{remark}[theorem]{Remark}
\numberwithin{equation}{section}
\begin{document}

\title[Riemann boundary value problems for the Chaplygin gas]{Riemann boundary value problems for the Chaplygin gas outside a convex cornered wedge}

\author{Bingsong Long}
\address{School of Mathematics and Statistics, Huanggang Normal University, Hubei 438000, China}
\email{longbingsong@hgnu.edu.cn}


\subjclass[2000]{35L65, 35L67, 35J25, 35J70, 76N10}

\date{\today}


\keywords{Riemann boundary value problem, Convex cornered wedge, Nonlinear degenerate elliptic equations, Euler equations, Chaplygin gas}

\begin{abstract}
We consider two-dimensional Riemann boundary value problems of Euler equations for the Chaplygin gas with two piecewise constant initial data outside a convex cornered wedge. In self-similar coordinates, when the flow at the wedge corner is subsonic, this problem can be reformulated as a boundary value problem for nonlinear degenerate elliptic equations in concave domains containing a corner larger than $\pi$. It is shown that there does not exist a global Lipschitz solution for this case. We analyze the sign of the flow velocity along a certain direction, and then obtain this result by deriving a contradiction. Besides, the unique existence of the solution to the problem is established when the flow at the wedge corner is supersonic. The results obtained here are also valid for the problem of shock diffraction by a convex cornered wedge.
\end{abstract}

\maketitle



\section{Introduction}\label{sec:1}
   The mathematical study of Riemann problems dates back to 1860 by the work of Riemann. These problems have been well studied in the one-dimensional regime (cf. \cite{Br99,Chang89,La63}). But for the two-dimensional regime, there has been no rigorous mathematical theory because the interactions of different nonlinear waves, mixed hyperbolic–elliptic type equations and the singularity of corners must be considered. To date, all efforts to mathematically analyze the two-dimensional Riemann problems for the polytropic gas have focused only on some special cases; see, for example, \cite{Chen97,RK84,LiZ09,ShengZ99,Wagner83,ZhangZ90}. In contrast, based on the properties of the Chaplygin gas, the study of these problems has yielded satisfactory results; see \cite{Bren05,CQu12,GuoSZ10,SL16,Serre09}.

    It is known that the Riemann problems only contain initial data, which are always called Riemann initial value problems. In practice, many physical problems often have some restrictions on the boundary. Therefore, Riemann boundary value problems, which are initial boundary value problems, are well worth being studied in the two-dimensional regime. Chen and Qu \cite{CQ12} established the existence of global self-similar solutions to two-dimensional Riemann boundary value problems of the Chaplygin gas outside a concave cornered wedge. In this paper, we further study these problems of the Chaplygin gas outside a convex cornered wedge.         
    
    The two dimensional full compressible Euler equations are
    \begin{equation}\label{eq:Euler system}
		\begin{cases}
			\partial_t\rho+\partial_x(\rho u)+\partial_y(\rho v)=0,\\
			\partial_t(\rho u)+\partial_x(\rho u^2+p)+\partial_y(\rho uv)=0,\\
                \partial_t(\rho v)+\partial_x(\rho uv)+\partial_y(\rho v^2+p)=0,
		\end{cases}
	\end{equation}
  where~$\rho>0, (u,v)$~are unknowns and represent the gas density and velocity, respectively; $p$ is the gas pressure. The state equation for the Chaplygin gas is \cite{Chap04}:
    \begin{equation}\label{eq:Chaplygin gas}
	  p(\rho)=A\Big(\frac{1}{\rho_{*}}-\frac{1}{\rho}\Big),
    \end{equation}
  with $\rho_{*}, A>0$ two constants. As a model of dark energy in the universe, the Chaplygin gas always appears in a number of cosmological theories \cite{KMP01,Popov10}. It follows from \eqref{eq:Chaplygin gas} that the sound speed of this gas is $c=c(\rho)=\sqrt{A}/\rho$. This indicates that any shock is characteristic and reversible, i.e., any rarefaction wave can be regarded as a shock with negative strength (see \cite{Serre09}). In what follows, both shocks and rarefaction waves are called pressure waves for convenience. 
  
  Now we describe our problem in details. On the $(x,y)$ plane, we give a convex cornered wedge (see \Cref{fg-pro1})
    \begin{equation}\label{wedge}
	  W:=\{(x,y):y<0, x<-y\cot\theta_0\},
    \end{equation}
  with two boundarys $\partial W_+:=\{y=-x\tan\theta_0, x\geq 0\}$ and $\partial W_-:=\{y=0, x\leq 0\}$. Then, our problem can be formulated mathematically as the following initial boundary value problem.
  
  \begin{problem}\label{prob1}
  We wish seek a solution of system \eqref{eq:Euler system}--\eqref{eq:Chaplygin gas} above the wedge \eqref{wedge} with the initial data
    \begin{equation}\label{initial condition}
		U(0,x,y)=(u,v,c)(0,x,y)=
        \begin{cases}
			(0,0,c_0)\quad& -\theta_0\leq\theta<\pi/2,\\
			(u_1,0,c_1)\quad& \pi/2<\theta\leq\pi,	
        \end{cases}
    \end{equation}  
  where $\theta=\arctan\dfrac{y}{x}$, and the slip boundary condition
	\begin{equation}\label{slip condition}
		(u,v)\cdot \bm{\nu}=0,
	\end{equation}
   where $\bm{\nu}$ is the exterior unit normal to $\partial W_+\cup\partial W_-$.
  \end{problem}

  \begin{figure}[H]
	\centering
	\begin{tikzpicture}[smooth, scale=0.8]
	\draw  [-latex] (8,3)--(9,3) node [right] {\footnotesize$x$};
        \draw  [-latex]  (8.5,2.5)--(8.5,3.5)node [above] {\footnotesize$y$};
        \draw  (4.5,3)--(4.5,6);
        \node at (4.5,3) [below] {\footnotesize$O$};
        \node at (3.5,2) {$W$};
        \node at (3,3) [below]{\footnotesize$\partial W_-$};
        \node at (6.5,1.8) [left]{\footnotesize$\partial W_+$};
        \draw  [dashed] (4.5,3)--(6.5,3);
	\draw  (4.5,3)--(8,1);
        \draw [draw=gray, fill=gray, fill opacity=0.6](4.5,3)--(8,1)--(1,1)--(1,3)--(4.5,3);
        \node at (2.5,4.5){\footnotesize$U_1=(u_1,0,c_1)$};
        \node at (6.5,4.5){\footnotesize$U_0=(0,0,c_0)$};
        \draw (5,2.75)arc (315:370:0.3);
        \node at (5.75,2.75){\footnotesize$\theta_0>0$};
	\end{tikzpicture}
	\caption{Riemann boundary value problems.} 
	\label{fg-pro1}
  \end{figure}
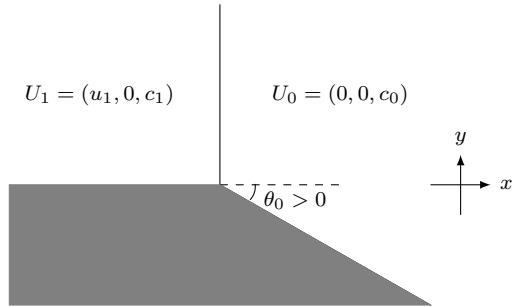

  We mainly focus on the existence of global self-similar solutions to \Cref{prob1}. There are three scenarios that need to be studied in \Cref{prob1} according to the relation between $u_1$ and $c_1$. If $u_1<c_1$, then the velocity of the flow at the origin is subsonic. For this case, under the assumption that the flow is continuous across the sonic line, we show that there does not exist a self-similar solution, which is Lipschitz at the origin $O$ (i.e. the case $(i)$ of \Cref{thm: Main2} below). If $c_1<u_1<c_0+c_1$, then the flow is supersonic at the origin, and there exists a global self-similar solution to \Cref{prob1} when the angle $\theta_0$ is small. For the critical case $u_1=c_1$, the existence of solutions is hard to establish, which is caused by the nonconvexity of the boundary of the subsonic region; see \Cref{Appendix1} for details. We leave it to the future. Moreover, by further assumptions on the initial data, \Cref{prob1} can be reduced to the problem of shock diffraction by a convex cornered wedge, which is also important problem in fluid mechanics \cite{Light49,Light50}. 

  Owing to the features of the wedge and the initial data, we can restate \Cref{prob1} in the self-similar coordinates $(\xi,\eta):=({x}/t,{y}/t)$; see \Cref{sec:2.2,sec:2.3}. Note that the corner $O$ larger than $\pi$ appears in the elliptic region for the case $u_1<c_1$. This leads to a boundary value problem for a nonlinear degenerate elliptic equation in concave domains containing such a corner point (see \Cref{prob2}). There have been many works on the degenerate boundary; see \cite{CF2018} and the references therein. However, for the corner larger than $\pi$ in concave domains, this has been studied only in some simplified models of the shock diffraction problem by convex cornered wedges, which was first considered by Lighthill \cite{Light49,Light50}. For the nonlinear wave system, the global existence of self-similar solutions was established by Kim \cite{Kim10}, and Chen et al. \cite{CDX14}. For the potential equation, Chen et al. \cite{X20} proved the nonexistence of regular solutions to this problem, where the notion of regular solutions is referred to  \cite[Definition 2.2]{X20}. For the Chaplygin gas, we obtain a similar result to \cite{X20} when \Cref{prob1} reduces to the shock diffraction problem by a convex cornered wedge (i.e. the case $(i)$ of \Cref{thm: shock diffraction} below). It should be pointed out that compared to the result in \cite{X20}, the restrictions on the solution in our result are more relaxed. More precisely, the additional condition of the regular solution in \cite[Definition 2.2]{X20} can be removed. 
  
  For concave domains containing a corner larger than $\pi$, one of the main difficult is analyzing the regularity of the solution to the elliptic equation. Note that the coefficients of the Euler equations for potential flow depend on the gradient of the potential function (see equations \eqref{eq:phi} or \eqref{eq:varphi}). Besides, the Lipschitz boundedness of the solution is a necessary condition for the potential equations \eqref{eq:phi} or \eqref{eq:varphi} to be uniformly elliptic. Therefore, to investigate the existence of self-similar solutions to \Cref{prob1}, it is essential to analyze whether the regularity of the solution reaches Lipschitz at the corner $O$. Moreover, Serre \cite{Serre09} showed that for the Chaplygin gas, if a piecewise smooth flow is irrotational and isentropic initially, then it remains so forever; that is, the flow under consideration is exactly potential flow. This fact means that once there does not exist a global Lipschitz solution for \Cref{prob2} below, we cannot expect the existence of global self-similar solutions to \Cref{prob1} even for Euler flow.   
  
  The organization of the rest of the paper as follows. \Cref{sec:2} first recalls some basic facts of the Riemann problems for the Chaplygin gas. After that, we analyze the global structures of the pressure waves in self-similar coordinates for the case $u_1<c_1$, and state main theorem of this paper as \Cref{thm: Main2}. \Cref{sec:3} further analyzes the case $u_1<c_1$. We show that a solution defined in \Cref{prob2} is monotonous with respect to the direction of $\xi$-axis, and then obtain the result of the case $(i)$ of \Cref{thm: Main2} by contradiction derivation. \Cref{sec:4} is devoted to study the case $c_1<u_1<c_0+c_1$. We establish the pressure wave structure and establish the existence of solutions for \Cref{prob1}. \Cref{sec:5} considers the problem of shock diffraction by a convex cornered wedge for the Chaplygin gas. \Cref{Appendix1} provides a preliminary analysis of the critical case $u_1=c_1$.

\section{Preliminaries}\label{sec:2} 

\subsection{Basic facts}\label{sec:2.1}   
  We give some useful known facts about the Riemann problems of the Chaplygin gas; see \cite{CQu12,Serre09} for the detailed proof. 
  
  In one-dimensional case, for the Euler equations
    \begin{equation}\label{eq:1-D Euler system}
		\begin{cases}
			\partial_t\rho+\partial_x(\rho u)=0,\\
			\partial_t(\rho u)+\partial_x(\rho u^2+p)=0,
		\end{cases}
	\end{equation}
 with initial data
    \begin{equation}\label{1-D initial condition}
		(u,c)(0,x)=
        \begin{cases}
			(u_l,c_l)\quad& x<0,\\
			(u_r,c_r)\quad& x>0,	
        \end{cases}
    \end{equation}
  there are four elementary waves on the $(u,c)$ plane as follow:
  \begin{align*}
	&R_1:u-u_l=c-c_l,c>c_l\quad and \quad R_2:u-u_l=-(c-c_l),c<c_l,\\
	&S_1:u-u_l=c-c_l,c<c_l\quad and \quad S_2:u-u_l=-(c-c_l),c>c_l.
  \end{align*}
  On the $(u,c)$ plane, the above waves can divide the upper half-plane $c>0$ into four regions (see \Cref{fg-cqu12}). Moreover, the following result has proved.

  \begin{lemma}\label{lemmaCQU12-1}\cite[Theorem 2.2]{CQu12}
    If 
    \begin{equation}\label{eq:solvability condition}
		u_l-u_r<c_l+c_r,
    \end{equation}
     then the problem \eqref{eq:1-D Euler system}--\eqref{1-D initial condition} admits a self-similar solution. The relation between different state on the $(u,c)$ plane is shown in \Cref{fg-cqu12}. Moreover, the intermediate state is
    \begin{equation}\label{eq:mediate state}
        u_m=\dfrac{u_r+u_l+c_r-c_l}{2},\qquad c_m=\dfrac{c_r+u_r-u_l+c_l}{2}.
    \end{equation}
  \end{lemma}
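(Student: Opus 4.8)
The plan is to exploit the defining feature of the Chaplygin gas recorded just above the statement: on the $(u,c)$ plane the four elementary wave curves through a left state $U_l=(u_l,c_l)$ collapse onto the two straight lines
\begin{equation*}
 u-c=u_l-c_l \qquad\text{and}\qquad u+c=u_l+c_l,
\end{equation*}
because the rarefaction and shock branches of each family coincide (the shocks are characteristic and reversible). Thus $R_1\cup S_1$ is the full line of slope $+1$ and $R_2\cup S_2$ the full line of slope $-1$, and constructing the Riemann solution reduces to intersecting two such lines rather than inverting nonlinear Hugoniot relations.

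Concretely, I would join $U_l$ to the intermediate state $U_m=(u_m,c_m)$ along the forward $1$-wave line through $U_l$, namely $u_m-c_m=u_l-c_l$, and join $U_m$ to $U_r=(u_r,c_r)$ along the backward $2$-wave line through $U_r$, namely $u_m+c_m=u_r+c_r$. Since these two lines have slopes $+1$ and $-1$ they are never parallel and meet in exactly one point, so $U_m$ is uniquely determined. Adding and subtracting the linear system
\begin{align*}
 u_m-c_m&=u_l-c_l,\\
 u_m+c_m&=u_r+c_r,
\end{align*}
then yields precisely $u_m=\tfrac{u_r+u_l+c_r-c_l}{2}$ and $c_m=\tfrac{c_r+u_r-u_l+c_l}{2}$, which is \eqref{eq:mediate state}. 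Because every point of each line is admissibly reachable (shock and rarefaction branches agreeing), the only surviving constraint is that $U_m$ lie in the physical half-plane $c>0$.

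The decisive step is therefore the admissibility check: the formula for $c_m$ shows that $c_m>0$ is equivalent to $u_r-u_l+c_r+c_l>0$, that is, to the hypothesis $u_l-u_r<c_l+c_r$ of \eqref{eq:solvability condition}. When this holds, $c_m>0$ gives $u_m-c_m<u_m+c_m$, so the $1$-wave speed stays strictly below the $2$-wave speed; together with $u_l-c_l=u_m-c_m$ and $u_m+c_m=u_r+c_r$ this ensures the two self-similar wave patterns are separated by the constant state $U_m$ and do not overlap. Assembling $U_l$, the $1$-wave, $U_m$, the $2$-wave, and $U_r$ in order of increasing $x/t$ produces the claimed self-similar solution, matching the configuration depicted in the figure.

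I expect the main obstacle to be bookkeeping rather than analysis: one must confirm that the sign conditions separating each branch ($c\gtrless c_l$ for shock versus rarefaction) remain mutually consistent along the chosen lines and that no spurious vacuum or secondary interaction appears as $c_m\to 0^{+}$. The linearity of the wave curves trivializes existence and uniqueness of $U_m$ and removes the usual nonlinear difficulties, so that condition \eqref{eq:solvability condition} is revealed to be exactly the threshold between solvability and vacuum formation.
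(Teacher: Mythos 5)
Your proposal is correct and follows essentially the same route as the source: the paper itself offers no proof of this lemma (it is quoted verbatim from \cite[Theorem 2.2]{CQu12}), and the standard argument there is exactly yours --- since for the Chaplygin gas the shock and rarefaction branches of each family lie on the same straight lines in the $(u,c)$ plane, one intersects $u-c=u_l-c_l$ with $u+c=u_r+c_r$ to get \eqref{eq:mediate state}, and the admissibility of the resulting configuration reduces to $c_m>0$, which is precisely \eqref{eq:solvability condition}. Your ordering check ($u_m-c_m<u_m+c_m$ so the two characteristic discontinuities bound a nonempty intermediate region) is the right way to close the existence claim.
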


  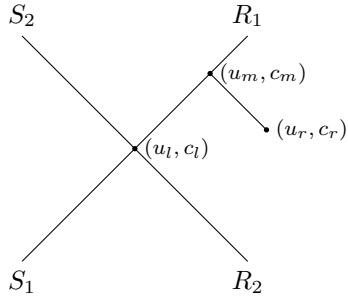
\begin{figure}[H]
	\centering
	\begin{tikzpicture}[smooth, scale=0.5]
        \draw  (2,0)--(8,6);
        \draw  (2,6)--(8,0);
        \node at (5,3) [right] {\footnotesize$(u_l,c_l)$};
        \node at (7,5) [right] {\footnotesize$(u_m,c_m)$};
        \node at (8.5,3.5) [right] {\footnotesize$(u_r,c_r)$};
        \draw  (7,5)--(8.5,3.5);
        \fill (7,5) circle (2pt);
        \fill (5,3) circle (2pt);
        \fill (8.5,3.5) circle (2pt);
        \node at (2,0) [below] {$S_1$};
        \node at (2,6) [above] {$S_2$};
        \node at (8,0) [below] {$R_2$};
        \node at (8,6) [above] {$R_1$};
	\end{tikzpicture}
	\caption{Four regions associated with a point $(u_l,c_l)$.}
	\label{fg-cqu12}
  \end{figure}

   \begin{remark}\label{re: 1-d initial data}
    It follows from \eqref{initial condition} that the condition \eqref{eq:solvability condition} becomes $u_1-c_1<c_0$. This means that our problem can only discussed under the condition $u_1<c_0+c_1$, which is also valid for the problem considered in \cite{CQ12}.  
   \end{remark}
   
 In two-dimensional case, the interaction of two pressure waves $L_0,L_1$ should be considered. Let us denote by $(u_m,v_m,c_m)$ the state of the flow between $L_0$ and $L_1$; denote by $(u_0,v_0,c_0)$ and $(u_1,v_1,c_1)$ the state on the other side of $L_0$ and $L_1$, respectively. In addition, let $\ell=\frac{c_2}{\tan\alpha/2}$, where $\alpha$ is the angle formed by $L_0$ and $L_1$. Then, we have

  \begin{lemma}\label{lemmaCQU12-2}\cite[Theorem 3.2]{CQu12}
    Assume that $c_i$~$(i=0,1,2)$ and $\ell$ satisfy the conditions
  \begin{align*}
	&c_1+c_0>c_2,\\
	&\arctan{\dfrac{c_0}{\ell}}+\arctan{\dfrac{c_1}{\ell}}-\arctan{\dfrac{c_2}{\ell}}<\frac{\pi}{2},
  \end{align*}
   then the interaction of two different waves can be well defined. In other words, from the interaction point, two waves $\hat{L}_0$ and $\hat{L}_1$ are generated, which are the extension of $L_0$ and $L_1$, respectively. Furthermore,
    \begin{equation*}
        c_m=\ell\tan\Big(\arctan\frac{c_0}{\ell}+\arctan\frac{c_1}{\ell}-\dfrac{\alpha_2}{2}\Big).
    \end{equation*}
  \end{lemma}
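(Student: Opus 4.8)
The plan is to localize the analysis at the interaction point $P$ and to reduce the two-dimensional interaction to the one-dimensional jump relations of \Cref{lemmaCQU12-1}, exploiting the two structural features of the Chaplygin gas used throughout the paper: the flow is potential (irrotational and isentropic, by Serre's result), and every pressure wave is a \emph{characteristic} straight line in the self-similar plane. Placing $P$ at the origin of the $(\xi,\eta)$-coordinates, I would write $L_0$ and $L_1$ as two rays through the origin enclosing the angular sector occupied by the intermediate state $2$. The hypothesis $\ell=c_2/\tan(\alpha/2)$ then fixes the aperture of this sector through $\alpha/2=\arctan(c_2/\ell)$, which identifies $\ell$ as the component of the pseudo-velocity tangent to the waves; since the waves are characteristic, this tangential component is conserved across each of them, so a single $\ell$ serves all four states. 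With this reading, $\beta_i:=\arctan(c_i/\ell)$ is precisely the pseudo-Mach angle of state $i$, because $\tan\beta_i=c_i/\ell$ matches the right triangle whose legs are the tangential pseudo-speed $\ell$ and the sonic radius $c_i$.

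Next I would record, across each wave, the relations inherited from the one-dimensional theory: the tangential pseudo-velocity is continuous and the normal component obeys $u_n\pm c=\text{const}$, exactly the linear invariants underlying the elementary waves $R_i,S_i$ of \Cref{lemmaCQU12-1}. In the Mach-angle variables $\beta_i$ these relations convert the passage across a wave into an additive change of turning angle. Crossing $L_0$ from state $2$ to state $0$ and $L_1$ from state $2$ to state $1$, and then closing the configuration through the emergent waves $\hat{L}_0,\hat{L}_1$ bounding state $m$, yields the bookkeeping identity $\beta_m=\beta_0+\beta_1-\beta_2$. Applying $c_m=\ell\tan\beta_m$ and substituting $\beta_2=\alpha/2=\arctan(c_2/\ell)$ then recovers the displayed formula $c_m=\ell\tan\bigl(\arctan(c_0/\ell)+\arctan(c_1/\ell)-\alpha/2\bigr)$.

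I would then verify that the two hypotheses are exactly what makes $c_m$ admissible. Using the superadditivity of $\arctan$ on the positive axis, namely $\arctan(c_0/\ell)+\arctan(c_1/\ell)\ge\arctan((c_0+c_1)/\ell)$, the condition $c_0+c_1>c_2$ forces $\beta_0+\beta_1-\beta_2>0$ and hence $c_m>0$; the condition $\arctan(c_0/\ell)+\arctan(c_1/\ell)-\arctan(c_2/\ell)<\pi/2$ keeps the argument of the tangent inside $(0,\pi/2)$, so that $c_m$ is finite and positive and the intermediate sector is genuine. Thus the two displayed inequalities together confine $\beta_0+\beta_1-\beta_2$ to $(0,\pi/2)$, which is precisely the window in which the emergent rays $\hat{L}_0,\hat{L}_1$ leave $P$ into the opposite sector and can be read off as the continuations of $L_0,L_1$.

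The main obstacle I anticipate is the geometric consistency step rather than the trigonometric algebra: one must show that $\hat{L}_0,\hat{L}_1$ are genuinely the straight-line \emph{extensions} of the incoming waves and that they bound a nonempty sector carrying a single uniform state $m$, rather than crossing each other or producing an overlapping, inadmissible configuration. This is where the characteristic nature of Chaplygin pressure waves is decisive, since both families are straight and tangent to the corresponding sonic circles, so the extension property reduces to a statement about common tangents through $P$; it is also where the two quantitative hypotheses must be shown to be not merely sufficient for $c_m>0$, but compatible with the correct orientation of $\hat{L}_0,\hat{L}_1$ and with the entropy admissibility of each emergent wave.
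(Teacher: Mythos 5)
First, a point of reference: the paper does not prove this lemma at all. It is quoted from \cite[Theorem 3.2]{CQu12}, and \Cref{sec:2.1} explicitly defers the proof to \cite{CQu12,Serre09}. So there is no in-paper argument to compare yours against; the following assesses your sketch on its own terms.

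Your skeleton is the right one: a single tangent length $\ell$ from the interaction point $P$ to all of the sonic circles, the pseudo-Mach angles $\beta_i=\arctan(c_i/\ell)$, the identity $\beta_m=\beta_0+\beta_1-\beta_2$, and the two hypotheses serving exactly to pin $\beta_m$ in $(0,\pi/2)$ so that $c_m=\ell\tan\beta_m$ is positive and finite (the inequality $\arctan(c_0/\ell)+\arctan(c_1/\ell)\ge\arctan((c_0+c_1)/\ell)$ you invoke is correct, though it is the \emph{sub}additivity of $\arctan$, not superadditivity). The genuine gap is at the central step: you assert the identity $\beta_m=\beta_0+\beta_1-\beta_2$ as ``bookkeeping'' following from the 1D invariants (continuity of the tangential pseudo-velocity and $u_n\pm c=\mathrm{const}$), but at $P$ every state is exactly sonic in the direction normal to each adjacent wave --- the normal pseudo-velocity component equals $\pm c_i$ there precisely because Chaplygin pressure waves are characteristic --- so the invariant $u_n\mp c=\mathrm{const}$ reads $0=0$ and carries no information; it cannot by itself produce an additive turning-angle relation. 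What actually yields the identity is the circle geometry already used in \Cref{sec:2.2}: each pressure wave is tangent to the sonic circles of \emph{both} adjacent states at one and the same point, with the two centers on the same side of the wave, so every circle has tangent length $\ell$ from $P$ and the ray $PO_i$ makes angle $\beta_i$ with each wave bordering state $i$. Taking $L_0$, $L_1$ at angles $\mp\beta_2$ and $O_2$ on their bisector, one finds $O_0$, $O_1$ at angles $\beta_0-\beta_2$ and $\beta_2-\beta_1$; the outgoing waves are then the \emph{second} tangents from $P$ to $\mathcal{C}_0$ and $\mathcal{C}_1$ (at angles $2\beta_0-\beta_2$ and $\beta_2-2\beta_1$), and equating the two resulting expressions for the direction of $PO_m$ gives $\beta_m=\beta_0+\beta_1-\beta_2$. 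This angle chase is the proof and needs to be written out. Relatedly, your final paragraph sets up the wrong verification: $\hat L_0,\hat L_1$ are \emph{not} the straight-line prolongations of $L_0,L_1$ (compare the bent transmitted waves \eqref{eq1:1.8}--\eqref{eq1:1.9} issuing from $P$ in \Cref{fg-pro3}); ``extension'' in the lemma means continuation of the same wave family, so collinearity is not something to be proved --- it is generally false.
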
 

\subsection{Pressure wave structure}\label{sec:2.2}  
  Let us consider the pressure wave structure of the case $u_1<c_1$. Note that the initial boundary value problem \eqref{eq:Euler system} and \eqref{initial condition}--\eqref{slip condition} is invariant under the following scaling
  \begin{equation*}
	(t,x,y)\longmapsto (\zeta t,\zeta x,\zeta y)\qquad \text{for} \quad\zeta\neq 0.
  \end{equation*}
  Then, we can seek a self-similar solution depending on $(\xi,\eta)=({x}/t,{y}/t)$.  

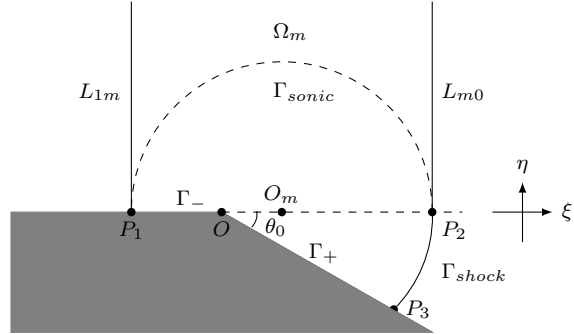
\begin{figure}[H]
	\centering
	\begin{tikzpicture}[smooth, scale=0.8]
	\draw  [-latex] (9,3)--(10,3) node [right] {\footnotesize$\xi$};
        \draw  [-latex]  (9.5,2.5)--(9.5,3.5)node [above] {\footnotesize$\eta$};
        \draw  [dashed] (4.5,3)--(8.5,3);
        \draw  [dashed] (8,3) arc (0:180:2.5);
        \draw  (7.36,1.38) arc (315:360:2.2);
        \fill (7.36,1.38) circle (2pt);
        \node at (7.4,1.38) [right] {\footnotesize$P_3$};
	  \draw  (4.5,3)--(8,1);
        \draw [draw=gray, fill=gray, fill opacity=0.6](4.5,3)--(8,1)--(1,1)--(1,3)--(4.5,3);
        \draw (5,2.75)arc (315:370:0.3);
        \node at (5.4,2.75){\footnotesize$\theta_0$};
        \fill (5.5,3) circle (2pt);
        \node at (5.5,3) [above] {\footnotesize$O_m$};
        \fill (4.5,3) circle (2pt);
        \node at (4.5,3) [below] {\footnotesize$O$};
        \fill (3,3) circle (2pt);
        \node at (3,3) [below] {\footnotesize$P_1$};
        \draw  (3,3)--(3,6.5);
        \node at (3,5) [left] {\footnotesize$L_{1m}$};
        \fill (8,3) circle (2pt);
        \node at (8,3) [below right] {\footnotesize$P_2$};
        \draw  (8,3)--(8,6.5);
        \node at (8,5) [right] {\footnotesize$L_{m0}$};
        \node at (5.2,6) [right] {\footnotesize$\Omega_m$};
        \node at (5.2,5) [right] {\footnotesize$\Gamma_{sonic}$};
        \node at (8,2) [right] {\footnotesize$\Gamma_{shock}$};
        \node at (4,3.2) {\footnotesize$\Gamma_{-}$};
        \node at (6.2,2) [above] {\footnotesize$\Gamma_{+}$};
	\end{tikzpicture}
	\caption{The case $u_1<c_1$.}
	\label{fg-pro2}
  \end{figure}

  Far from the wedge, our problem can be regarded as one-dimensional Riemann problem with initial data \eqref{initial condition} separated by the $y$-axis. The condition \eqref{eq:solvability condition} is always satisfied, provided that $u_1<c_1$ (see \Cref{re: 1-d initial data} above). Hence, two pressure waves $L_{1m}:\xi=u_1-c_1$ and $L_{m0}:\xi=c_0$ are generated. Let $\Omega_m$ be the domain between $L_{1m}$ and $L_{m0}$. By \Cref{lemmaCQU12-1}, the state of the flow $U_m:=(u_m,v_m,c_m)$ in $\Omega_m$ satisfies
    \begin{equation}\label{eq:Um}
		\begin{cases}
		u_m=(u_1-c_1+u_0+c_0)/2=(u_1-c_1+c_0)/2,\\
		v_m=0,\\
            c_m=(c_0+u_0-u_1+c_1)/2=(c_0-u_1+c_1)/2.
		\end{cases}
    \end{equation} 

  We remark here that the two generated waves may be shocks or rarefaction waves. Since the flow is stationary in the domain $-\theta_0\leq\theta<\pi/2$, i.e., $u_0=0$, there are four possible combinations of waves: $1R|2S$ or $1S|2S$ for $u_1>0$, and $1S|2R$ or $1R|2R$ for $u_1<0$, where $S,R$ stand for shocks and rarefaction waves, respectively. Hereafter, without loss of generality, only the case $u_1>0$ will be discussed.
  
  Let $\mathcal{C}_0$ (resp. $\mathcal{C}_1,\mathcal{C}_m$) be the sonic circle with center $(0,0)$ (resp. $(u_1,0), (u_m,v_m)$) and radius $c_0$ (resp. $c_1, c_m$) determined by the state $U_0$ (resp. $U_1,U_m$). Since any pressure wave is characteristic, $L_{1m}$ must be tangent to both $\mathcal{C}_1$ and $\mathcal{C}_m$ at a point $P_1(u_1-c_1,0)$, and $L_{m0}$ must be tangent to both $\mathcal{C}_m$ and $\mathcal{C}_0$ at a point $P_2(c_0,0)$. Note that the point $P_1$ lies on the negative $\xi$-axis owing to $u_1<c_1$, i.e., the origin $O$ lies in the interior of $\mathcal{C}_m$. Due to the impact of the boundary, $L_{m0}$ should be bent from the point $P_2$ to ensure that it intersects the boundary $\partial W_+$ perpendicularly at a point $P_3$. Obviously, the arc ${P_2P_3}$ is a part of the circle $\mathcal{C}_0$, and the coordinates of $P_3$ is $(c_0\cos\theta_0,-c_0\sin\theta_0)$. Then, we can draw the structure of pressure waves as shown in \Cref{fg-pro2}. 

\subsection{Main theorem}\label{sec:2.3}   
  As mentioned above, the flow under consideration is potential flow. Then, let us introduce a velocity potential $\Phi$ by $(u,v)=(\partial_x \Phi,\partial_y \Phi)$. Correspondingly, we seek a solution with the form $\rho(t,x,y)=\rho(\xi,\eta), \Phi(t,x,y)=t\phi(\xi,\eta)$ in the self-similar coordinates $(\xi,\eta)$. By \eqref{eq:Euler system}, a trivial calculation yields that the function $\phi$ satisfies
      \begin{equation}\label{eq:phi}
     \mathrm{div}(\rho D\phi)-(\xi\rho_{\xi}+\eta\rho_\eta)=0,
      \end{equation}
  or
    \begin{equation}\label{eq:phi second order}
     \big(c^2-(\phi_\xi-\xi)^2\big)\phi_{\xi\xi}-2(\phi_\xi-\xi)(\phi_\eta-\eta)\phi_{\xi\eta}+\big(c^2-(\phi_\eta-\eta)^2\big)\phi_{\eta\eta}=0,
      \end{equation}
  where $\mathrm{div}$ and $D$ stand for the divergent and the gradient operator with respect to $(\xi,\eta)$, respectively; $\rho$ or $c$ is given by the following Bernoulli equation
      \begin{equation}\label{eq:Bernoulli for phi}
     \frac{1}{2}|D\phi|^2+\phi-(\xi\phi_{\xi}+\eta\phi_\eta)-\frac{A}{2\rho^2}=B_0.
      \end{equation}
  Here, the constant $A$ is given in \eqref{eq:Chaplygin gas}; $B_0$ is the Bernoulli constant determined by the initial data. Also, let $\varphi=\phi-\frac{1}{2}(\xi^2+\eta^2)$ be the pseudovelocity potential. Then equation \eqref{eq:phi} becomes
      \begin{equation}\label{eq:varphi}
     \mathrm{div}(\rho D\varphi)+2\rho=0.
      \end{equation}
      
  Before proceeding, let us give a subsonic domain $\Omega_{sub}$. We denote by $\Gamma_{sonic}$ and $\Gamma_{shock}$ the arcs ${P_1P_2}$ and ${P_2P_3}$, respectively; denote by $\Gamma_{+}$ and $\Gamma_{-}$ the lines $OP_3$ and $OP_1$, respectively. Then the domain $\Omega_{sub}$ is bounded by $\Gamma_{sonic}$, $\Gamma_{shock}$, $\Gamma_{+}$ and $\Gamma_{-}$. The state of the flow is pseudosupersonic and known in $\Omega\setminus\Omega_{sub}$, but to be determined in $\Omega_{sub}$. It is easy to check that equation \eqref{eq:varphi} (or \eqref{eq:phi}) is hyperbolic in $\Omega\setminus\overline{\Omega_{sub}}$, and parabolic degenerate on the boundary $\overline{\Gamma_{shock}}\cup\overline{\Gamma_{sonic}}$. Then, the stata of the flow in $\Omega_{sub}$ is expected to be pseudosubsonic; that is, \eqref{eq:varphi} (or \eqref{eq:phi}) is elliptic in $\Omega_{sub}$. Furthermore, it is shown in \cite{CQ12,Serre09} that \eqref{eq:varphi} is uniformly elliptic in the region that is strictly away from $\overline{\Gamma_{shock}}\cup\overline{\Gamma_{sonic}}$. Therefore, it is natural to seek a solution so that \eqref{eq:varphi} (or \eqref{eq:phi}) is also locally uniform elliptic in $\Omega_{sub}$.
  
  Then, Problem \ref{prob1} can be reformulated for the function $\phi$ as
  
  \begin{problem}\label{prob2}
	We wish seek a solution 
 $$\phi \in C^{0,1}(\overline{\Omega_{sub}})\cap C^3(\overline{\Omega_{sub}}\setminus(\{O\}\cup\overline{\Gamma_{sonic}}\cup\overline{\Gamma_{shock}}))$$
 satisfying equation \eqref{eq:phi} in $\Omega_{sub}$ with the boundary conditions:
 \begin{align}
     \phi=\phi_0=0\quad&\text{on}\quad {\Gamma_{shock}},\label{b shock}\\
     \phi=\phi_m=u_m\xi\quad&\text{on}\quad {\Gamma_{sonic}},\label{b sonic}\\
    D\phi\cdot \bm{\nu}=0\quad&\text{on}\quad \Gamma_{+}\cup\Gamma_{-}.\label{b Neu boundary}
 \end{align}
  Moreover, equation \eqref{eq:phi} is elliptic in $\Omega_{sub}$ and uniformly elliptic in the domain away from the degenerate boundary $\overline{\Gamma_{shock}}\cup\overline{\Gamma_{sonic}}$.
 \end{problem}

 For the regularity of $\phi$ across the sonic boundary $\Gamma_{sonic}$, the flow is continuous across the sonic line for the polytropic gas, as given by Chen and Feldman in the problem of shock reflection \cite{CF10} (also see \cite{Bae09}). However, to our best knowledge, there are no relevant conclusions for the Chaplygin gas. In this paper, we will show that if the flow is continuous across the sonic boundary $\Gamma_{sonic}$, then the solution given in Problem \ref{prob2} does not exist for the case $u_1<c_1$, but exists for the case $c_1<u_1<c_0+c_1$. 

Now we can present our main theorem as

\begin{theorem}\label{thm: Main2}
For \Cref{prob1}, we have the following conclusion:

$(i)$ For the case $u_1<c_1$, \Cref{prob1} does not permit a self-similar solution as in \Cref{prob2} under the assumption that the flow is continuous across the sonic boundary $\Gamma_{sonic}$.

$(ii)$ For the case $c_1<u_1<c_0+c_1$, assume that $\theta_0$ is small, then \Cref{prob1} exists a piecewise smooth solution.
\end{theorem}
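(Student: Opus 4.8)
For part $(i)$ my plan is a proof by contradiction. Suppose that for $u_1<c_1$ there is a solution $\phi$ as in \Cref{prob2}, so in particular $\phi\in C^{0,1}(\overline{\Omega_{sub}})$ and \eqref{eq:phi second order} is locally uniformly elliptic in the interior and up to the nondegenerate part of the boundary. The first step is to linearize: using the Bernoulli relation \eqref{eq:Bernoulli for phi} to write $c$ (equivalently $\rho$) as a function of $D\phi$ and differentiating \eqref{eq:phi second order} in $\xi$ and in $\eta$, I obtain homogeneous second-order linear equations, with no zeroth-order term, satisfied by the velocity components $u=\phi_\xi$ and $v=\phi_\eta$; these equations inherit local uniform ellipticity in $\Omega_{sub}$ away from $\overline{\Gamma_{sonic}}\cup\overline{\Gamma_{shock}}$. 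I then record the boundary data. Continuity of the flow across $\Gamma_{sonic}$ gives $(u,v)=(u_m,0)$ there; the slip condition \eqref{b Neu boundary} on $\Gamma_-\subset\{\eta=0\}$ gives $v=0$; on $\Gamma_+$ it forces $(u,v)$ to be tangent to the line $OP_3$, i.e. $v=-u\tan\theta_0$; and $\phi\equiv0$ on $\Gamma_{shock}\subset\mathcal{C}_0$ forces $D\phi$ to be parallel to $(\xi,\eta)$ (radial) there.

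The second step is to extract the monotonicity in the $\xi$-direction announced in \Cref{sec:3}. Noting that $v=\phi_\eta$ vanishes on the connected portion $\Gamma_{sonic}\cup\Gamma_-$ of the boundary and, via the slip and shock relations just recorded, has a controlled sign on $\Gamma_+\cup\Gamma_{shock}$, I apply the maximum principle and the Hopf boundary-point lemma to the linear equation for $v$ to conclude that $v$ keeps a strict sign in the interior of $\Omega_{sub}$; transporting this sign through the slip relation $v=-u\tan\theta_0$ on $\Gamma_+$ and through the equation for $u$ yields that $\phi$ is strictly monotone along the $\xi$-axis direction, so that the flow velocity has a definite, nonvanishing orientation as one approaches the corner $O$ from inside $\Omega_{sub}$.

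The third step exploits that $O$ is a reflex corner of opening $\omega=\pi+\theta_0>\pi$ carrying the oblique-derivative (slip) condition \eqref{b Neu boundary} on both faces $\Gamma_-$ and $\Gamma_+$, and that $O$ lies strictly inside the ellipticity region, away from $\overline{\Gamma_{sonic}}\cup\overline{\Gamma_{shock}}$. A local separation-of-variables (Kondrat'ev-type) expansion for this homogeneous oblique-derivative problem is governed by the exponents $k\pi/\omega$, $k=0,1,2,\dots$; the first nonconstant exponent $\pi/\omega\in(0,1)$ produces a gradient blowing up like $r^{\pi/\omega-1}$, which the Lipschitz bound on $\phi$ forbids. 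Hence the singular mode is absent and the expansion begins at $k=2$, whose exponent $2\pi/\omega>1$ (using $\theta_0<\pi$) forces $D\phi(O)=0$, i.e. the velocity vanishes at the corner. This contradicts the definite, nonvanishing orientation of the velocity obtained in Step~2, and the contradiction establishes the nonexistence asserted in $(i)$. I expect the crux to be precisely this coupling of Steps~2 and~3: propagating the maximum-principle sign all the way to the degenerate sonic boundary and to the reflex corner, and making the corner asymptotics rigorous for the genuinely nonlinear, locally uniformly elliptic equation; it is essential here that $\omega>\pi$, since for a convex corner $\omega<\pi$ one has $\pi/\omega>1$ and no such obstruction appears.

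For part $(ii)$, with $c_1<u_1<c_0+c_1$ the flow at $O$ is pseudo-supersonic: now $P_1=(u_1-c_1,0)$ lies on the positive $\xi$-axis, the neighborhood of $O$ is hyperbolic, and the offending reflex corner is no longer contained in $\Omega_{sub}$. The plan is first to build the global pressure-wave pattern and the fixed arcs $\Gamma_{sonic},\Gamma_+,\Gamma_-$ explicitly from \Cref{lemmaCQU12-1,lemmaCQU12-2}, leaving $\Gamma_{shock}$ as a free boundary on which $\phi=0$. For $\theta_0$ small the configuration is a small perturbation of the flat case $\theta_0=0$, which is solvable; I would then set up a Schauder fixed-point iteration, at each step solving the uniformly elliptic problem in the interior and handling the parabolic degeneracy on $\Gamma_{sonic}\cup\Gamma_{shock}$ by the weighted local estimates already available for the Chaplygin gas in \cite{CQ12,Serre09}, and updating the free boundary via the Rankine--Hugoniot/characteristic condition. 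Closing the scheme with uniform $C^{2,\alpha}$ a priori estimates and a continuity (or implicit-function) argument in $\theta_0$ then yields the piecewise smooth solution; the smallness of $\theta_0$ is what guarantees that the perturbed domain retains the convexity and uniform ellipticity needed for the estimates and for solvability of the oblique-derivative problem on $\Gamma_+$.
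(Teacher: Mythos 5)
Your strategy for part $(i)$ --- differentiate the equation, apply the maximum principle to a velocity component, then examine the corner $O$ --- starts out parallel to the paper, but the contradiction you propose between Steps 2 and 3 does not exist. The strong maximum principle and the Hopf lemma give a \emph{strict} sign of $u=\phi_\xi$ (or of $v$) only on $\Omega_{sub}\cup\Gamma_+\cup\Gamma_-$; they say nothing about the limit of the velocity at the corner $O$ itself, which is excluded from that set. Indeed the paper proves \emph{both} of your conclusions simultaneously and consistently: \Cref{key point} gives $u>0$ in $\Omega_{sub}\cup\Gamma_+\cup\Gamma_-$, while \Cref{lemmaX20-2} (the analogue of your Kondrat'ev step, obtained from the corner estimates of Chen--Feldman--Hu--Xiang) gives $D\phi=\bm{0}$ at $O$. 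A function can be strictly positive in the open region and tend to $0$ at the corner, so ``definite nonvanishing orientation near $O$'' versus ``$D\phi(O)=0$'' is not a contradiction, and your argument collapses at exactly the point you identify as the crux. There is a secondary problem in Step 2: the sign of $v$ on $\Gamma_+\cup\Gamma_{shock}$ is not controlled a priori (on $\Gamma_+$ it is $-u\tan\theta_0$ with the sign of $u$ unknown there, and on $\Gamma_{shock}$ you only know $D\phi$ is radial, not its orientation), so the maximum-principle input for $v$ is incomplete; the paper instead works with $u$ and extracts its boundary sign from the degenerate-boundary identity $\phi=\tfrac12(\xi^2+\eta^2)+B_0$ on $\overline{\Gamma_{sonic}}\cup\overline{\Gamma_{shock}}$, which locates the extrema of $\phi$ at $\overline{\Gamma_{shock}}$ and at $P_1$.

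The missing idea is a quantitative mechanism that plays the interior positivity of $u$ against its vanishing at $O$. The paper supplies this with an integral identity: fix $r$ and $\delta>0$ with $u\ge\delta$ on $\partial C_r(O)\cap\Omega_{sub}$, set $f=(u-\delta)^-$, and integrate the nonnegative elliptic quadratic form $\frac{\widetilde{a_{11}}}{\widetilde{a_{22}}}u_\xi^2+\frac{2\widetilde{a_{12}}}{\widetilde{a_{22}}}u_\xi u_\eta+u_\eta^2$ over $D_{r\setminus\varepsilon}\cap\{u<\delta\}$. The divergence theorem turns this into boundary integrals of $v_{\bm{\tau}}f$: the piece on $\Gamma_-$ vanishes, the piece on the small arc $\partial C_\varepsilon(O)$ tends to $0$ by the gradient estimate of \Cref{lemmaX20-1}, and the piece on $\Gamma_+$ tends to $-\tfrac12\delta^2\tan\theta_0<0$ precisely because $D\phi(O)=\bm{0}$ forces $f\to-\delta$ at the corner and because the wedge is convex, $\tan\theta_0>0$. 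A nonnegative quantity with a strictly negative limit is the contradiction. For part $(ii)$, your free-boundary/Schauder iteration is workable in spirit but misses the decisive simplification for the Chaplygin gas: every pressure wave is characteristic, so $\Gamma_{shock}$ is a \emph{known} arc of the sonic circle $\mathcal{C}_0$ rather than a free boundary; the whole wave pattern is built explicitly from \Cref{lemmaCQU12-1,lemmaCQU12-2} (smallness of $\theta_0$ is needed only so that the wave interaction at $P$ is well defined and $\mathcal{C}_0$ meets $\Gamma_+$), and after reflecting across $P_1P_3$ the problem becomes a fixed-domain Dirichlet problem solved by citing Serre's existence theorem --- no iteration or continuity argument is required.
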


\begin{remark}
From the discussion in \Cref{sec:2.2}, we can see that in the case $u_1<c_1$, there is no wave interaction. However, the interaction of different waves will occur in the case $c_1<u_1<c_0+c_1$ (see \Cref{sec:4} below). To ensure that the interaction of two waves is well-defined, we should assume that $\theta_0$ is small by \Cref{lemmaCQU12-2}. 
\end{remark}

For the case $u_1<c_1$, inspired by the work in \cite{X20}, we first analyze the sign of the velocity along the $\xi$-axis $u=\phi_\xi$ in $\Omega_{sub}$, and then obtain our conclusion by deriving a contradiction. For the case $c_1<u_1<c_0+c_1$, we can establish the pressure wave structure, and then reduce our problem to a Dirichlet problem \eqref{for u1>c1 dir} below, which can be solved by the result in \cite{Serre09}.

\section{The case $u_1<c_1$}\label{sec:3} 
\subsection{Some useful estimates}\label{sec:3.1} 
Let us first list some useful estimates for the solution $\phi$. Let $r_0:=\frac{1}{2}\text{dist}(O,\Gamma_{shock}\cup\Gamma_{sonic})$, and let $C_r(O)$ be the circle with center the origin $O$ and radius $r$.

  \begin{lemma}\label{lemmaX20-1}
    Let $\phi$ be a solution of Problem \ref{prob2}. For any $r\in(0,r_0)$, if 
    \begin{equation*}
       \|D\phi\|_{C(C_r(O)\cap\Omega_{sub})}\leq \omega(r),
    \end{equation*}
   then, 
    \begin{equation*}
       |D^2\phi|\leq C\dfrac{\omega(2r)}{r}\quad\text{on}\quad \partial C_r(O)\cap\Omega_{sub},
    \end{equation*}
    where $\omega(r)$ is a nondecreasing continuous function on $[0,r_0]$; the constant $C$ depends on the elliptic ratio $\lambda$ and $\|D\phi\|_{C(C_{r_0}(O)\cap\Omega_{sub})}$.
  \end{lemma}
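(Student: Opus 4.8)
The plan is to prove \Cref{lemmaX20-1} by a blow-up (rescaling) argument that converts the apparently degenerate Hessian bound near the corner $O$ into a fixed-scale Schauder estimate for a uniformly elliptic equation. Fix $r\in(0,r_0)$ and set $M:=\omega(2r)$. Since $\omega$ is nondecreasing, the hypothesis applied on $C_{2r}(O)$ gives $\|D\phi\|_{C(C_{2r}(O)\cap\Omega_{sub})}\le M$. First I would introduce the normalized blow-up
\[
\Psi(z):=\frac{\phi(rz)-\phi(O)}{rM},\qquad z\in\mathcal D_r:=\{z:rz\in\overline{\Omega_{sub}}\}\cap\overline{C_2(0)},
\]
so that $D_z\Psi(z)=\tfrac1M D\phi(rz)$ and $D_z^2\Psi(z)=\tfrac{r}{M}D^2\phi(rz)$. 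By construction $\|D\Psi\|_{C^0(\mathcal D_r)}\le1$, and with $\Psi(0)=0$ one gets $\|\Psi\|_{C^1(\mathcal D_r)}\le C_0$ for a universal $C_0$. Substituting into \eqref{eq:phi second order} and cancelling the common factor $M/r$ produced by the second derivatives, $\Psi$ solves
\[
\tilde a_{11}\Psi_{z_1z_1}+2\tilde a_{12}\Psi_{z_1z_2}+\tilde a_{22}\Psi_{z_2z_2}=0,
\]
with $\tilde a_{11}=c^2-(M\Psi_{z_1}-rz_1)^2$, $\tilde a_{12}=-(M\Psi_{z_1}-rz_1)(M\Psi_{z_2}-rz_2)$, $\tilde a_{22}=c^2-(M\Psi_{z_2}-rz_2)^2$, where $M\Psi_{z_i}-rz_i$ are exactly the pseudo-velocity components $\phi_\xi(rz)-\xi$, $\phi_\eta(rz)-\eta$, and $c^2$ is recovered from the Bernoulli relation \eqref{eq:Bernoulli for phi} as a smooth function of $(rz,\Psi,D\Psi)$. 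The key structural point is that the coefficients $\tilde a_{ij}(z)$ coincide with the physical coefficients evaluated at the point $rz$, so the ellipticity is unchanged by the scaling.

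Next I would verify that the rescaled equation is uniformly elliptic on $\mathcal D_r$ with constants independent of $r$. Because $2r<\text{dist}(O,\Gamma_{shock}\cup\Gamma_{sonic})=2r_0$, the set $C_{2r}(O)\cap\Omega_{sub}$ lies strictly inside the pseudo-subsonic region and strictly away from the degenerate boundary $\overline{\Gamma_{shock}}\cup\overline{\Gamma_{sonic}}$, where \eqref{eq:varphi} (equivalently \eqref{eq:phi}) is uniformly elliptic with ratio $\lambda$ as recorded after \Cref{prob2}. Hence $\tilde a_{ij}$ give a uniformly elliptic operator with the same $\lambda$, and the $C^1$ bound together with the Bernoulli relation controls the coefficients uniformly in $r$. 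I would also note that the straight boundary pieces $\Gamma_{+}=OP_3$ and $\Gamma_{-}=OP_1$ pass through $O$, hence are invariant under $z\mapsto rz$, and the slip condition \eqref{b Neu boundary} $D\phi\cdot\bm\nu=0$ transforms into the homogeneous condition $D\Psi\cdot\bm\nu=0$ on $\Gamma_{+}\cup\Gamma_{-}$, with $\bm\nu$ constant on each line and therefore a regular, non-tangential (oblique) direction.

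Then I would run the fixed-scale estimate on the annular region $A:=(\overline{C_{3/2}(0)}\setminus C_{1/2}(0))\cap\overline{\Omega_{sub}}$, which stays a fixed positive distance from the corner vertex $z=0$. Starting from the a priori bound $\|\Psi\|_{C^1}\le C_0$, interior and boundary De Giorgi--Nash--Moser / $C^{1,\alpha}$ estimates for quasilinear uniformly elliptic equations (with the oblique condition on the flat pieces of $\Gamma_{\pm}$) yield $\|\Psi\|_{C^{1,\alpha}(A)}\le C(\lambda,\dots)$. With $\Psi\in C^{1,\alpha}$ the coefficients $\tilde a_{ij}$ (and $c$ via Bernoulli) become $C^\alpha$ with controlled norm, so freezing them and applying interior and boundary Schauder estimates gives $\|\Psi\|_{C^{2,\alpha}(A')}\le C$ on a slightly smaller annulus $A'$ still containing the unit circle. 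In particular $|D^2\Psi|\le C$ on $\partial C_1(0)\cap\overline{\Omega_{sub}}$, with $C$ depending only on $\lambda$ and, through the coefficient bounds, on $\|D\phi\|_{C(C_{r_0}(O)\cap\Omega_{sub})}$. Undoing the scaling via $|D^2\phi(rz)|=\tfrac{M}{r}|D^2\Psi(z)|$ then yields $|D^2\phi|\le C\,\omega(2r)/r$ on $\partial C_r(O)\cap\Omega_{sub}$, as claimed.

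The main obstacle I anticipate is keeping the ellipticity ratio and the resulting Schauder constants genuinely independent of $r$ as $r\to0$; this is what forces the normalization by $M=\omega(2r)$, after which all $r$-dependence is absorbed and the physical coefficients (pseudo-velocity and sound speed) are controlled uniformly on the region $C_{r_0}(O)\cap\Omega_{sub}$ away from the degenerate boundary. A secondary delicate point is that the reentrant corner at $O$ (interior angle larger than $\pi$), where elliptic regularity genuinely fails, must never enter the estimate; this is automatic because the target set $\partial C_r(O)$ sits at unit distance from the vertex after rescaling, so the Schauder machinery is only invoked at interior points and at flat-boundary points of $\Gamma_{+}\cup\Gamma_{-}$, never at the corner vertex itself.
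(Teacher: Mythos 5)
Your blow-up argument is correct and is essentially the same proof the paper relies on: the paper does not write out a proof but simply invokes \cite[Lemmas 3.1--3.3]{X20}, where exactly this rescaling-to-unit-scale-plus-Schauder argument (annulus at fixed distance from the corner, scale-invariant straight boundaries with the oblique slip condition, uniform ellipticity away from $\overline{\Gamma_{shock}}\cup\overline{\Gamma_{sonic}}$) is carried out. Your write-up is a faithful reconstruction of that argument, with the normalization by $\omega(2r)$ playing the same role as in \cite{X20}.
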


  \begin{lemma}\label{lemmaX20-2}
    If $\phi$ is a solution of Problem \ref{prob2}, then we have
    \begin{equation*}
      \phi\in C^1(\overline{C_{r_0}(O)\cap\Omega_{sub}}). 
    \end{equation*}
   Moreover,
    \begin{equation*}
      D\phi=\bm{0}\quad\text{at the origin}\quad O. 
    \end{equation*}
  \end{lemma}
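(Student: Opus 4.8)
The plan is to deduce the pointwise identity $D\phi=\bm 0$ at $O$ from the continuity of $D\phi$ up to the corner, and to obtain that continuity from a gradient decay estimate fed by the Hessian bound of \Cref{lemmaX20-1}. Throughout I work inside $C_{r_0}(O)$: by the choice $r_0=\frac12\,\mathrm{dist}(O,\Gamma_{shock}\cup\Gamma_{sonic})$ and the statement of \Cref{prob2}, equation \eqref{eq:phi} is uniformly elliptic there with ellipticity ratio $\lambda$, and $\|D\phi\|_{C(C_{r_0}(O)\cap\Omega_{sub})}\le M<\infty$ since $\phi\in C^{0,1}(\overline{\Omega_{sub}})$. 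The geometric fact I exploit is that the interior angle of $\Omega_{sub}$ at $O$ equals $\pi+\theta_0\in(\pi,2\pi)$; that is, $O$ is a reflex corner formed by the two straight walls $\Gamma_-$ and $\Gamma_+$, on each of which the conormal condition \eqref{b Neu boundary} holds. The vanishing then reduces to the $C^1$ regularity: if $\phi\in C^1(\overline{C_{r_0}(O)\cap\Omega_{sub}})$, then letting $X\to O$ along $\Gamma_-$ and along $\Gamma_+$ in \eqref{b Neu boundary} yields $D\phi(O)\cdot\bm\nu_-=0$ and $D\phi(O)\cdot\bm\nu_+=0$, where $\bm\nu_\pm$ are the constant normals of the two walls. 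Since $\theta_0>0$ the walls are not parallel, so $\bm\nu_-,\bm\nu_+$ are linearly independent and the only vector orthogonal to both is $\bm 0$, whence $D\phi(O)=\bm 0$.

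It thus suffices to prove $C^1$ regularity up to $O$. Set $\omega(r):=\|D\phi\|_{C(C_r(O)\cap\Omega_{sub})}$, which is nondecreasing, continuous, and bounded by $M$. By \Cref{lemmaX20-1}, $|D^2\phi|\le C\,\omega(2r)/r$ on $\partial C_r(O)\cap\Omega_{sub}$. Integrating $D^2\phi$ along circular arcs and radial segments shows that the oscillation of $D\phi$ over a dyadic annulus $\{r\le|X|\le 2r\}\cap\Omega_{sub}$ is bounded by $C\,\omega(2r)$. Hence, if I can establish the decay
\[
\omega(r)\le C\,r^{\alpha},\qquad 0<r\le r_0,
\]
for some $\alpha\in(0,1)$, then $|D^2\phi|\le C\,r^{\alpha-1}$ is integrable near $O$, the dyadic oscillations of $D\phi$ form a convergent geometric series, and $D\phi$ obeys a Cauchy criterion at $O$; this gives $\phi\in C^1(\overline{C_{r_0}(O)\cap\Omega_{sub}})$ and simultaneously $\omega(r)\to0$, consistent with $D\phi(O)=\bm 0$.

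I would prove $\omega(r)\le Cr^\alpha$ via a one–step improvement $\omega(r/2)\le\kappa\,\omega(r)$ with a fixed $\kappa<1$, obtained by a blow–up/compactness contradiction. Rescaling $v_r(X):=[\phi(rX)-\phi(O)]/(r\,\omega(r))$ normalizes the gradient so that $\sup_{|X|\le1}|Dv_r|=1$, while monotonicity of $\omega$ gives $|Dv_r|\le1$ on all of $\{|X|\le1\}$, including near the vertex. After scaling, the Hessian bound of \Cref{lemmaX20-1} provides uniform equicontinuity of $Dv_r$, so along $r_k\to0$ a subsequence converges to a limit $v_\infty$ on the infinite reflex sector $K$ of opening $\pi+\theta_0$; the limit solves a linear uniformly elliptic equation with the homogeneous conormal condition on the two non–parallel walls of $K$, and still satisfies $|Dv_\infty|\le1$ with $\sup_{|X|\le1}|Dv_\infty|=1$. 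The decisive point is the reflex opening: separating variables, the nonconstant homogeneous solutions in $K$ behave like $|X|^{\mu}$ with smallest exponent $\mu_1\in(0,1)$, whose gradient blows up at the vertex, so the bound $|Dv_\infty|\le1$ near $X=0$ forces that mode to be absent. The surviving modes have exponents $\mu\ge1+\alpha$ for some $\alpha>0$ depending only on $\theta_0$ and $\lambda$, so their gradients decay toward the vertex, yielding $\sup_{|X|\le1/2}|Dv_\infty|\le 2^{-\alpha}<1$. This contradicts the failure of the one–step decay (which would force $\sup_{|X|\le1/2}|Dv_\infty|$ arbitrarily close to $1$), and the contradiction establishes the decay and hence $C^1$ regularity.

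I expect this last step to be the main obstacle. Two points require care: first, controlling the blow–up so that the limiting equation is genuinely linear and uniformly elliptic with an honestly oblique conormal condition on both walls — the obliqueness survives because the slip condition $\rho D\phi\cdot\bm\nu=0$ has $\rho>0$; and second, making rigorous the modal exclusion at the reflex corner, equivalently a Liouville/rigidity statement that a solution in $K$ with homogeneous conormal data and gradient bounded near the vertex cannot carry the singular $\mu_1$–mode. Working inside $C_{r_0}(O)$, where \eqref{eq:phi} is uniformly elliptic and the degeneracy on $\overline{\Gamma_{shock}}\cup\overline{\Gamma_{sonic}}$ is irrelevant, is precisely what makes these elliptic tools available.
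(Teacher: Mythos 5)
The paper itself gives no argument for this lemma: it simply asserts (just after the statement) that Lemmas \ref{lemmaX20-1} and \ref{lemmaX20-2} ``can be proved directly by applying the discussion in [X20, Lemmas 3.1--3.3]''. Your strategy --- rescale at the reflex corner of opening $\pi+\theta_0$, use the Hessian bound of \Cref{lemmaX20-1} for compactness, exclude the singular Neumann mode $r^{\pi/(\pi+\theta_0)}$ because its gradient is unbounded while $\phi$ is Lipschitz, conclude gradient decay, and then derive $D\phi(O)=\bm 0$ from the two linearly independent normals $\bm\nu_\pm$ --- is exactly the mechanism of that cited proof, and your final deduction of $D\phi(O)=\bm 0$ from $C^1$ regularity is correct and clean. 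So in spirit you have reconstructed the right argument.

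However, two steps as written would not go through without repair. First, the compactness of your normalized rescalings $v_r=[\phi(rX)-\phi(O)]/(r\,\omega(r))$: the scaled Hessian bound gives $|D^2v_r(X)|\le C\,\omega(2r|X|)/(|X|\,\omega(r))$, so the ``uniform equicontinuity of $Dv_r$'' you invoke requires a doubling bound $\omega(2r)\le C\,\omega(r)$, which is not known a priori (if $\omega$ decays very irregularly the normalized family need not be compact). This is a standard but genuine issue in improvement-of-flatness arguments and must be handled, e.g.\ by restricting to scales where doubling holds or by running a first, unnormalized blow-up $[\phi(rX)-\phi(O)]/r$. Second, and more seriously, your modal analysis presupposes that the blow-up limit solves a \emph{constant-coefficient} equation in the infinite sector. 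But the coefficients $\widetilde{a_{ij}}$ of \eqref{eq:phi second order} are $c^2-(\phi_\xi-\xi)^2$, etc.; evaluated at $rX$ they converge to constants only if $D\phi(rX)$ converges, i.e.\ only if you already know $\omega(r)\to 0$ (or at least that $D\phi$ has a limit at $O$) --- which is precisely what you are trying to prove. The standard way out is to first show $\omega(r)\to0$ via the unnormalized blow-up together with a Liouville-type theorem for Lipschitz solutions with conormal data on the two non-parallel walls of the infinite sector (only then does the second, normalized blow-up see the Laplacian and does your separation of variables with exponents $\mu_1=\pi/(\pi+\theta_0)<1<\mu_2=2\pi/(\pi+\theta_0)$ apply). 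You flag both difficulties yourself, but flagging them is not the same as closing them; as it stands the proposal is a correct outline with these two unclosed steps, both of which are addressed in the reference the paper relies on.
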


  Since \eqref{eq:phi} is uniformly elliptic in the domain away from the degenerate boundary $\overline{\Gamma_{shock}}\cup\overline{\Gamma_{sonic}}$, \Cref{lemmaX20-1,lemmaX20-2} can be proved directly by applying the discussion in \cite[Lemmas 3.1--3.3]{X20}. 
\subsection{The sign of $u$}\label{sec: for u} 
 Now we turn to prove that if $\phi$ is a solution of Problem \ref{prob2} and the flow is continuous across the sonic boundary $\Gamma_{sonic}$, then the velocity along the $\xi$-axis $u=\phi_\xi>0$ in the interior of $\Omega_{sub}\cup\Gamma_{+}\cup\Gamma_{-}$. To this purpose, we first prove the following lemma.

   \begin{lemma}\label{for velocity u}
    Assume that $\phi\in C^{0,1}(\overline{\Omega_{sub}})\cap C^3(\overline{\Omega_{sub}}\setminus(O\cup\overline{\Gamma_{sonic}}\cup\overline{\Gamma_{shock}}))$ satisfies the elliptic equation
     \begin{equation}\label{linear phi}     a_{11}\phi_{\xi\xi}+2a_{12}\phi_{\xi\eta}+a_{22}\phi_{\eta\eta}=0\quad\text{in}\quad \Omega_{sub}, 
     \end{equation} 
     and the boundary condition
     \begin{equation}\label{linear Neu}
        D\phi\cdot \bm{\nu}=0\quad\text{on}\quad \Gamma_{+}\cup\Gamma_{-}, 
     \end{equation}
    where the coefficient $a_{ij}\in C^1(\overline{\Omega_{sub}}\setminus(O\cup\overline{\Gamma_{sonic}}\cup\overline{\Gamma_{shock}})$, $i,j=1,2$. If \eqref{linear phi} is uniformly elliptic in the domain away from $\overline{\Gamma_{sonic}}\cup\overline{\Gamma_{shock}}$, then $u=\phi_\xi$ cannot achieve the local minimum anywhere in the interior of $\Omega_{sub}\cup\Gamma_{+}\cup\Gamma_{-}$ unless it is constant.
  \end{lemma}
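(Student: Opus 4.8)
The plan is to show that $u=\phi_\xi$ itself solves a homogeneous second-order uniformly elliptic equation with no zeroth-order term, subject to oblique-derivative conditions on $\Gamma_+\cup\Gamma_-$, and then to apply the strong maximum principle in the interior together with the Hopf boundary-point lemma on the two straight sides. First I would differentiate \eqref{linear phi} with respect to $\xi$; writing $u=\phi_\xi$ this gives
\[
a_{11}u_{\xi\xi}+2a_{12}u_{\xi\eta}+a_{22}u_{\eta\eta}+(a_{11})_\xi\phi_{\xi\xi}+2(a_{12})_\xi\phi_{\xi\eta}+(a_{22})_\xi\phi_{\eta\eta}=0 .
\]
The terms $\phi_{\xi\xi}=u_\xi$ and $\phi_{\xi\eta}=u_\eta$ are already first derivatives of $u$, while $\phi_{\eta\eta}$ is eliminated by solving \eqref{linear phi} for it, $\phi_{\eta\eta}=-(a_{11}u_\xi+2a_{12}u_\eta)/a_{22}$, which is legitimate because ellipticity forces $a_{22}>0$. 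This turns the above into $a_{11}u_{\xi\xi}+2a_{12}u_{\xi\eta}+a_{22}u_{\eta\eta}+b_1u_\xi+b_2u_\eta=0$ with the same principal part and bounded coefficients $b_1,b_2$ on every subdomain strictly away from $\overline{\Gamma_{sonic}}\cup\overline{\Gamma_{shock}}$ and $O$ (there $a_{ij}\in C^1$, $a_{22}$ is bounded below, and $\phi\in C^3$ so $u\in C^2$). Since this equation is uniformly elliptic with no zeroth-order term, the strong maximum principle shows that $u$ cannot attain a local minimum at an interior point of $\Omega_{sub}$ unless it is locally constant.

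Next I would extract the boundary conditions satisfied by $u$. On $\Gamma_-\subset\{\eta=0\}$ the slip condition \eqref{linear Neu} reads $\phi_\eta=v=0$; differentiating tangentially in $\xi$ gives $\phi_{\xi\eta}=u_\eta=0$, so $u$ obeys the homogeneous Neumann condition $\partial_\nu u=0$ there. On $\Gamma_+$ (the segment $OP_3$, with constant orthonormal frame $\{\tau,\nu\}$ and $\tau\cdot e_1=\cos\theta_0\neq0$) the condition is $\phi_\nu=0$; differentiating this tangentially gives $\phi_{\tau\nu}=0$, and rewriting \eqref{linear phi} in the rotated frame then yields $\phi_{\nu\nu}=-(A_{\tau\tau}/A_{\nu\nu})\phi_{\tau\tau}$ on $\Gamma_+$, where $A_{\tau\tau},A_{\nu\nu}$ are the transformed (still uniformly elliptic) coefficients. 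Using $u=(\tau\cdot e_1)\phi_\tau+(\nu\cdot e_1)\phi_\nu$ and differentiating along $\Gamma_+$ I obtain $u_\tau=(\tau\cdot e_1)\phi_{\tau\tau}$ and $u_\nu=(\nu\cdot e_1)\phi_{\nu\nu}$, whence the oblique-derivative relation $u_\nu+\beta u_\tau=0$ with $\beta=(\nu\cdot e_1)A_{\tau\tau}/\big((\tau\cdot e_1)A_{\nu\nu}\big)$. The direction $\mathbf{b}:=\nu+\beta\tau$ is outward oblique because $\mathbf{b}\cdot\nu=1>0$.

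With these ingredients, suppose $u$ attained a local minimum at a relatively interior point $x_0$ of $\Gamma_+\cup\Gamma_-$ and were not locally constant there. The interior ball condition holds since $\Gamma_\pm$ are line segments, and $u\in C^1$ up to $\Gamma_\pm$ away from the endpoints, so the Hopf boundary-point lemma applies: in its oblique form on $\Gamma_+$ and its ordinary form on $\Gamma_-$ it forces the outward (oblique) derivative to be strictly negative, i.e. $u_\nu+\beta u_\tau<0$ on $\Gamma_+$ or $\partial_\nu u<0$ on $\Gamma_-$. This contradicts the identities $u_\nu+\beta u_\tau=0$ and $\partial_\nu u=0$ just derived. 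Hence $u$ is locally constant at every candidate minimum, whether interior or on $\Gamma_\pm$; propagating this through the connected set $\Omega_{sub}$ by the unique continuation property of the uniformly elliptic $u$-equation shows $u$ is constant throughout, which is exactly the asserted dichotomy.

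The step I expect to be the main obstacle is the treatment of the oblique side $\Gamma_+$. Unlike $\Gamma_-$, the slip condition does not translate into a plain Neumann condition for $u$; one must combine the tangential differentiation of $\phi_\nu=0$ with the equation itself to produce a boundary relation for $u$ in a genuinely outward-oblique direction $\mathbf{b}$, and then verify both $\mathbf{b}\cdot\nu>0$ and the uniform ellipticity and boundedness of the $u$-equation in a one-sided neighbourhood of $\Gamma_+$ (staying clear of the degenerate set $\overline{\Gamma_{sonic}}\cup\overline{\Gamma_{shock}}$ and the corner $O$), so that the oblique Hopf lemma is legitimately applicable.
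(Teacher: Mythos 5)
Your proposal is correct and follows essentially the same route as the paper: differentiate \eqref{linear phi} in $\xi$, use the equation and $a_{22}>0$ to eliminate $\phi_{\eta\eta}$ and obtain a uniformly elliptic equation for $u$ with no zeroth-order term, apply the strong maximum principle in the interior, and combine tangential differentiation of the slip condition with the Hopf lemma on $\Gamma_\pm$. Your treatment of $\Gamma_+$ is in fact more explicit than the paper's (which rotates coordinates and asserts $u_{\tilde\eta}=0$ there, whereas you correctly derive a genuinely oblique condition $u_\nu+\beta u_\tau=0$ by also invoking the equation to handle $\phi_{\nu\nu}$), but the underlying argument is the same.
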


  \begin{proof}
     Let us first obtain the equation of $u=\phi_{\xi}$ by taking $\partial_\xi$ of \eqref{linear phi}
     
    \begin{equation}\label{linear u}
     \begin{split}
        0&=(a_{11}\phi_{\xi\xi}+2a_{12}\phi_{\xi\eta}+a_{22}\phi_{\eta\eta})_\xi\\      &=a_{11}u_{\xi\xi}+2a_{12}u_{\xi\eta}+a_{22}u_{\eta\eta}+(a_{11})_\xi \phi_{\xi\xi}+2(a_{12})_\xi\phi_{\xi\eta}+(a_{22})_\xi\phi_{\eta\eta}\\        &=a_{11}u_{\xi\xi}+2a_{12}u_{\xi\eta}+a_{22}u_{\eta\eta}+(a_{11})_\xi u_{\xi}+2(a_{12})_\xi u_{\eta}-\frac{(a_{22})_\xi}{a_{22}}(a_{11}u_{\xi}+2a_{12}u_{\eta}),
     \end{split}
    \end{equation}
    where we have employed equation \eqref{linear phi} to derive the last equality.
    From the condition of this lemma, it follows that equation \eqref{linear u} is locally uniformly elliptic. Thus, by the strong maximum principle, $u$ cannot achieve the local minimum anywhere in $\Omega_{sub}$ unless it is constant.

    Next, we consider the boundary $\Gamma_{+}\cup\Gamma_{-}$. From the boundary condition $D\phi\cdot\bm{\nu}=\phi_\eta=0$ on $\Gamma_{-}$, we have
    \begin{equation}\label{Neu condition for u1}
        u_\eta=0 \quad\text{on}\quad \Gamma_{-},
    \end{equation}
   by taking the tangential derivative along $\Gamma_{-}$. 

   For the boundary $\Gamma_{+}$, we can make a rotation of coordinates as follows
 \begin{equation}\label{rotation}
		\begin{cases}			\tilde{\xi}=\xi\cos\theta_0-\eta\sin\theta_0,\\	\tilde{\eta}=\xi\sin\theta_0+\eta\cos\theta_0.\\  \end{cases}
    \end{equation}
  It is obvious that equation \eqref{linear phi} has the same form, and is also locally uniformly elliptic in the new coordinates $(\tilde{\xi},\tilde{\eta})$. Moreover, the boundary $\Gamma_{+}$ lies on the positive $\tilde{\xi}$--axis in the coordinates $(\tilde{\xi},\tilde{\eta})$. Hence, we get
    \begin{equation}\label{Neu condition for u2}
        u_{\tilde{\eta}}=0 \quad\text{on}\quad \Gamma_{+}.
    \end{equation}
  A direct verification shows that the boundary conditions \eqref{Neu condition for u1} and \eqref{Neu condition for u2} are oblique. Then, this completes the proof by applying the Hopf lemma.  
  \end{proof} 

 By using Lemma \ref{for velocity u}, we can obtain the following result.

 \begin{proposition}\label{key point}
   Let $\phi$ be a solution of Problem \ref{prob2}. Assume that the flow is continuous across the sonic boundary $\Gamma_{sonic}$. Then
     \begin{equation}\label{u>0}
        u=\phi_\xi>0 \quad\text{in} \quad\Omega_{sub}\cup\Gamma_{+}\cup\Gamma_{-}.
     \end{equation}      
  \end{proposition}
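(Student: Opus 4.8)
The plan is to establish the strict positivity $u=\phi_\xi>0$ in $\overline{\Omega_{sub}}\cap(\Omega_{sub}\cup\Gamma_+\cup\Gamma_-)$ by combining the no-local-minimum result of Lemma~\ref{for velocity u} with a careful analysis of the boundary values of $u$ on each of the four arcs bounding $\Omega_{sub}$. Since equation \eqref{eq:phi} is of the form \eqref{linear phi} with coefficients $a_{11}=c^2-(\phi_\xi-\xi)^2$, etc., and is uniformly elliptic away from the degenerate boundary, Lemma~\ref{for velocity u} applies directly and tells us $u$ attains no interior local minimum (and none on $\Gamma_+\cup\Gamma_-$) unless it is constant. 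The strategy is then to rule out the constant case and to bound $u$ from below along the remaining boundary $\overline{\Gamma_{sonic}}\cup\overline{\Gamma_{shock}}$, so that the minimum over $\overline{\Omega_{sub}}$ must be a nonnegative value attained only on the degenerate arcs, whence $u>0$ strictly in the interior portion of interest.

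\medskip
First I would compute the boundary data for $u$ on $\Gamma_{sonic}$ and $\Gamma_{shock}$. On $\Gamma_{sonic}$ the Dirichlet condition \eqref{b sonic} gives $\phi=u_m\xi$, and under the hypothesis that the flow is continuous across the sonic boundary, the gradient $D\phi$ matches the known exterior state $U_m$, so that $u=\phi_\xi=u_m$ there. Using \eqref{eq:Um}, $u_m=(u_1-c_1+c_0)/2$, which is strictly positive precisely when $u_1-c_1+c_0>0$; since $u_1<c_1$ but $u_1>0$ is assumed and $c_0>0$, one should verify that $u_m>0$ holds in the regime under consideration (this reduces to $c_0>c_1-u_1$, which I expect to follow from the wave-structure constraints of \Cref{sec:2.2}, and I would record it as the key sign input). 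On $\Gamma_{shock}$ the situation is subtler: \eqref{b shock} gives $\phi=0$, corresponding to the stationary state $U_0=(0,0,c_0)$, so the tangential derivative of $\phi$ along $\Gamma_{shock}$ vanishes, and I would extract the value or lower bound of $u=\phi_\xi$ from the Rankine--Hugoniot / characteristic relations there, showing $u\geq 0$ (indeed $u=0$ on the stationary side).

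\medskip
With these boundary values in hand, I would argue as follows. Suppose $u$ attained a value $\leq 0$ somewhere in $\Omega_{sub}\cup\Gamma_+\cup\Gamma_-$. Let $m=\min_{\overline{\Omega_{sub}}}u$. By Lemma~\ref{for velocity u}, the minimum cannot be attained at an interior point or on $\Gamma_+\cup\Gamma_-$ (unless $u$ is constant), so it is attained on $\overline{\Gamma_{sonic}}\cup\overline{\Gamma_{shock}}$. On these arcs $u\in\{u_m,\,0\}$ (with $u_m>0$), so $m\geq 0$, and if $m=0$ it is attained only on $\Gamma_{shock}$. To upgrade $u\geq 0$ to the strict inequality $u>0$ in the interior, I would invoke the strong maximum principle for equation \eqref{linear u}: if $u$ achieved its minimum value $0$ at an interior point it would be constant $\equiv 0$, contradicting $u=u_m>0$ on $\Gamma_{sonic}$; hence no interior zero occurs. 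The constant case is likewise excluded by the mismatch $u_m\neq 0$ between the two Dirichlet arcs.

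\medskip
The main obstacle I anticipate is the degeneracy of \eqref{linear phi} on $\overline{\Gamma_{sonic}}\cup\overline{\Gamma_{shock}}$ and at the corner $O$, which prevents a naive global application of the maximum principle over the closed domain $\overline{\Omega_{sub}}$. The delicate point is to justify that the infimum of $u$ is genuinely controlled by its boundary values despite the loss of uniform ellipticity near the degenerate arcs; here I would lean on Lemma~\ref{lemmaX20-2}, which furnishes $\phi\in C^1$ up to $O$ with $D\phi=\bm 0$ at $O$, so that $u(O)=0$ is a boundary value compatible with (and not violating) the claimed positivity in the \emph{interior}. A second subtlety is checking that the continuity-across-$\Gamma_{sonic}$ hypothesis indeed forces $u=u_m$ rather than merely $u\leq u_m$; I expect this to follow from matching the full gradient of the self-similar solution, using that $\phi_\eta$ is likewise determined there. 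Once these boundary identifications and the degenerate maximum-principle argument are secured, the conclusion \eqref{u>0} follows immediately.
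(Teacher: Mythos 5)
Your overall architecture matches the paper's: reduce the claim, via Lemma~\ref{for velocity u}, to showing $u=\phi_\xi\ge 0$ on the excluded boundary $\{O\}\cup\overline{\Gamma_{sonic}}\cup\overline{\Gamma_{shock}}$; handle $O$ with Lemma~\ref{lemmaX20-2} and $\Gamma_{sonic}$ with the continuity assumption ($u=u_m>0$, with $u_m>0$ resting on the standing assumption $u_1>0$ of \Cref{sec:2.2}, exactly the sign input you flagged); then upgrade to strict positivity by the strong maximum principle. The genuine gap is your treatment of $\overline{\Gamma_{shock}}$. The condition $\phi=\phi_0=0$ there only matches the value of $\phi$ (equivalently the tangential velocity) across the shock; the normal component of $D\phi$ jumps, so the interior trace of $u=\phi_\xi$ is \emph{not} $0$ and is not determined pointwise by the exterior stationary state. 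Concretely, on $\Gamma_{shock}$ the vanishing tangential derivative forces $D\phi=\lambda(\xi,\eta)$ for some scalar $\lambda$, and the Bernoulli law \eqref{eq:Bernoulli for phi} then gives $c=c_0\lvert\lambda-1\rvert$, which is consistent with the degeneracy relation $\lvert D\phi-(\xi,\eta)\rvert=c$ for \emph{every} $\lambda\in\mathbb{R}$; since shocks for the Chaplygin gas are reversible, no entropy/admissibility condition is available to restrict the sign of $\lambda$. So the proposed ``extract $u\ge 0$ from the Rankine--Hugoniot relations'' cannot close.

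The paper supplies the missing idea by running a maximum principle on $\phi$ itself: the degeneracy relation combined with \eqref{eq:Bernoulli for phi} shows $\phi=\tfrac12(\xi^2+\eta^2)+B_0$ on the whole degenerate boundary, so by the geometry of the wave pattern ($\Gamma_{shock}\subset\mathcal{C}_0$ is the part of that boundary farthest from the origin, and $P_1=(u_1-c_1,0)$ its unique closest point) the maximum of $\phi$ over $\overline{\Omega_{sub}}$ is attained at every point of $\overline{\Gamma_{shock}}$ and the minimum only at $P_1$. The sign of the normal derivative at these extremum points, together with the vanishing tangential derivative and $\xi>0$ on $\Gamma_{shock}$, yields $\phi_\xi\ge 0$ on $\overline{\Gamma_{shock}}$ and at the corner $P_1$ (a point your argument also leaves untreated). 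Your concern about degeneracy obstructing a naive global maximum principle is resolved exactly as you anticipated: Lemma~\ref{for velocity u} is applied only on $\Omega_{sub}\cup\Gamma_+\cup\Gamma_-$, and all sign information is pushed onto the degenerate arcs, where it is obtained by the $\phi$-level argument above rather than by reading off traces from the exterior states.
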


  \begin{proof}
      To prove this proposition, we only need to verify that $u=\phi_\xi\geq 0$ on $O\cup\overline{\Gamma_{sonic}}\cup\overline{\Gamma_{shock}}$ by \Cref{for velocity u}. It follows from \Cref{lemmaX20-2} that $u=\phi_\xi = 0$ at the origin $O$. Moreover, under the assumption that the flow is continuous across the sonic boundary, we have
    \begin{equation}
        u=\phi_\xi=(u_m,0)\cdot(1,0)=u_m>0 \quad\text{on} \quad\Gamma_{sonic}.
     \end{equation}       
     Here, we recall that $u_m>0$ because $u_1>0$ mentioned in \Cref{sec:2.2}.

     Next, we focus on the boundary $\overline{\Gamma_{shock}}$ and the point $P_1$ in \Cref{fg-pro2}. Notice that \eqref{eq:phi} (or \eqref{eq:phi second order}) is uniformly elliptic in the domain away from the boundary $\overline{\Gamma_{sonic}}\cup\overline{\Gamma_{shock}}$, and then the Hopf lemma can be applied on the boundary $\Gamma_{+}\cup\Gamma_{-}$ and the origin $O$. Using the Hopf lemma and the maxiumum principle, we can see that 
    \begin{equation}\label{max for phi}
      \underset{\small{\Omega_{sub}}}{\large\sup} \phi=\underset{\small{\overline{\Gamma_{sonic}}\cup\overline{\Gamma_{shock}}}}{\large\sup} \phi.
     \end{equation} 
     On the other hand, \eqref{eq:phi} is degenerate on the boundary $\overline{\Gamma_{sonic}}\cup\overline{\Gamma_{shock}}$,     which means that $(\phi_\xi-\xi)^2+(\phi_\eta-\eta)^2=c^2$. Combing this fact and the Bernoulli equation \eqref{eq:Bernoulli for phi}, the boundary conditions \eqref{b shock}--\eqref{b sonic} are equivalent to     
     \begin{equation}\label{boundary for phi2}
     \phi=\frac{1}{2}(\xi^2+\eta^2)+B_0\quad\text{on}\quad \overline{\Gamma_{sonic}}\cup\overline{\Gamma_{shock}}.
     \end{equation}
      With the use of \eqref{max for phi}--\eqref{boundary for phi2} and the pressure wave structure shown in \Cref{fg-pro2}, we know that the maximum value of $\phi$ is attained at every point of $\overline{\Gamma_{shock}}$, and the minimum value of $\phi$ is only attained at the point $P_1$, which implies
    \begin{equation}
        u=\phi_\xi\geq 0 \quad\text{on} \quad\overline{\Gamma_{shock}}\cup P_1.
     \end{equation}  

     In conclusion, we complete the proof of the lemma.
  \end{proof}

\subsection{Proof of the case $(i)$ of \Cref{thm: Main2}}\label{sec:3.2} 
 We are going to prove the case $(i)$ of \Cref{thm: Main2} by using \Cref{key point}.

 It follows from \eqref{u>0} that for any $r\in (0,r_0)$, there must exist a number $\delta>0$, such that
    \begin{equation}\label{assume for delta}
        u=\phi_\xi\geq \delta>0 \quad\text{on} \quad\partial C_r(O)\cap \Omega_{sub}.
    \end{equation} 
  Define a function
    \begin{equation}\label{for h}
		f:=(u-\delta)^-=
       \begin{cases}			
        0,&\quad \text{if} \quad u\geq\delta,\\
        u-\delta, &\quad \text{if}  \quad u<\delta.
        \end{cases}
    \end{equation}
  Moreover, for any $\varepsilon\in (0,\frac{r}{2})$, we define a domain
     \begin{equation}\label{for D}
        D_{r\setminus\varepsilon}:=\big(C_r(O)\setminus C_\varepsilon(O)\big)\cap \Omega_{sub},
     \end{equation} 
 with four vertices $E_1:=\partial C_r(O)\cap\Gamma_{-}, E_2:=\partial C_r(O)\cap\Gamma_{+}, F_1:=\partial C_\varepsilon(O)\cap\Gamma_{-}$ and $F_2:=\partial C_\varepsilon(O)\cap\Gamma_{+}$.

  \begin{figure}[H]
	\centering
	\begin{tikzpicture}[smooth, scale=0.6]
	\draw  [-latex] (9,3)--(10,3) node [right] {\footnotesize$\xi$};
        \draw  [-latex]  (9.5,2.5)--(9.5,3.5)node [above] {\footnotesize$\eta$};
        \draw [draw=gray, fill=gray, fill opacity=0.6](4.5,3)--(8,1)--(1,1)--(1,3)--(4.5,3); 
        \draw [dashed] (4.5,3)--(8,3);
        \draw (5,2.75)arc (315:370:0.3);
        \node at (5.4,2.75){\footnotesize$\theta_0$};
        \draw [domain=-30:180] plot ({4.5+3*cos(\x)}, {3+3*sin(\x)});
        \fill (3.5,3) circle (2pt);
        \draw [-latex, domain=-30:90] plot ({4.5+3*cos(\x)}, {3+3*sin(\x)});
        \draw [domain=90:180] plot ({4.5+3*cos(\x)}, {3+3*sin(\x)});
        \fill (3.5,3) circle (2pt);
        \node at (3.5,3) [below] {\footnotesize$F_1$};
        \fill (1.5,3) circle (2pt);
        \node at (1.5,3) [below] {\footnotesize$E_1$};
        \draw [domain=-30:180] plot ({4.5+1*cos(\x)}, {3+1*sin(\x)});
        \draw [-latex,domain=180:90] plot ({4.5+1*cos(\x)}, {3+1*sin(\x)});
        \draw [domain=90:-30] plot ({4.5+1*cos(\x)}, {3+1*sin(\x)});
        \node at (4.5,6)[above] {\footnotesize$\bm{\tau}$};
        \node at (4.3,4)[above] {\footnotesize$\bm{\tau}$};
         \node at (3,3)[below] {\footnotesize$\bm{\tau}$};
          \node at (6.2,2)[below] {\footnotesize$\bm{\tau}$};
        \fill (5.35,2.5) circle (2pt);
        \node at (5.35,2.5) [below] {\footnotesize$F_2$};
        \fill (7.1,1.52) circle (2pt);
        \node at (7.1,1.52) [below] {\footnotesize$E_2$};
        \node at (4.5,5.2) {\footnotesize$D_{r\setminus\varepsilon}$};
        \node at (6.1,3.5) {\footnotesize$\partial C_\varepsilon(O)$};
        \node at (8,4.2) {\footnotesize$\partial C_r(O)$};
        \draw[-latex] (1.5,3) -- (3,3); 
        \draw (3,3) -- (3.5,3); 
        \draw[-latex] (5.35,2.5) -- (6.22,2); 
        \draw(6.22,2) -- (7.1,1.52); 
	\end{tikzpicture}
	\caption{The case $u_1<c_1$.}
	\label{fg-pro5}
  \end{figure}

 For convenience, equation \eqref{eq:phi second order} can be written as
    \begin{equation}\label{phi2}  \widetilde{a_{11}}\phi_{\xi\xi}+2\widetilde{a_{12}}\phi_{\xi\eta}+\widetilde{a_{22}}\phi_{\eta\eta}=0, 
    \end{equation} 
 where $\widetilde{a_{ij}}$ has its expression given in \eqref{eq:phi second order}. Thanks to the ellipticity of equation \eqref{eq:phi second order}, there is $\widetilde{a_{22}}>0$ in $\Omega_{sub}\cup\Gamma_{+}\cup\Gamma_{-}$, and the following integral is always nonnegative
    \begin{equation}\label{I}  
    \textbf{N}:=\int_{D_{r\setminus\varepsilon}\cap\{u<\delta\}} 
    \Big(\frac{\widetilde{a_{11}}}{\widetilde{a_{22}}}u^2_\xi+\frac{2\widetilde{a_{12}}}{\widetilde{a_{22}}}u_\xi u_\eta+u^2_{\eta}\Big) \hspace{1mm}d\xi d\eta\geq 0.
    \end{equation} 
 We will arrive at a contradiction as $\varepsilon\rightarrow 0$.

 Before proceeding, we recall some useful equalities here. Owing to $\phi\in C^3(\overline{\Omega_{sub}}\setminus(O\cup\overline{\Gamma_{sonic}}\cup\overline{\Gamma_{shock}})$ and $(u,v)= (\phi_\xi, \phi_\eta)$, then 
    \begin{equation}\label{eq:u=v}  
    u_\eta=v_\xi\quad \text{in}\quad \overline{C_r(O)}\setminus O.
    \end{equation} 
 From \eqref{for h}, and using \eqref{assume for delta}, we find 
    \begin{equation}\label{eq:h}    
   f=0 \quad \text{on}\quad \partial C_r(O)\cap \Omega_{sub}.
    \end{equation} 
 Besides, equation \eqref{phi2} implies that 
    \begin{equation}\label{eq:for v}  
  v_\eta=-\Big(\frac{\widetilde{a_{11}}}{\widetilde{a_{22}}}u_\xi+\frac{2\widetilde{a_{12}}}{\widetilde{a_{22}}}u_\eta\Big).
    \end{equation}  

 With use of the relations \eqref{eq:u=v}--\eqref{eq:for v}, a trivial calculation yields
 \begin{equation}
     \begin{split}
         \textbf{N}&=\int_{D_{r\setminus\varepsilon}} 
    \Big(\frac{\widetilde{a_{11}}}{\widetilde{a_{22}}}u_\xi f_\xi+\frac{2\widetilde{a_{12}}}{\widetilde{a_{22}}} u_\eta f_\xi+u_{\eta}f_\eta\Big)\hspace{1mm} d\xi d\eta\\      
    &=\int_{D_{r\setminus\varepsilon}} 
    -v_\eta f_\xi+v_\xi f_\eta \hspace{1mm} d\xi d\eta =\int_{D_{r\setminus\varepsilon}} 
    -(v_\eta f_\xi+v_{\xi\eta}f)+(v_{\xi\eta}f+v_\xi f_\eta) \hspace{1mm} d\xi d\eta\\
    &=-\int_{D_{r\setminus\varepsilon}} 
    D\cdot(v_\eta f,-v_\xi f) \hspace{1mm} d\xi d\eta=-\oint_{\partial D_{r\setminus\varepsilon}} 
   v_{\bm{\tau}} f \hspace{1mm} ds\\
    &=-\int_{\partial C_\varepsilon(O)\cap\Omega_{sub}} 
   v_{\bm{\tau}} f \hspace{1mm} ds-\int_{\overline{E_1F_1}} 
   v_{\bm{\tau}} f \hspace{1mm} ds-\int_{\overline{F_2E_2}} 
   v_{\bm{\tau}} f \hspace{1mm} ds,\\
     \end{split}
    \end{equation}
   where $\bm{\tau}$ is the unit tangential vector of $\partial D_{r\setminus\varepsilon}$, and its direction is given in Figure \ref{fg-pro5}; the line integral is respect to the arc length $s$. 

   From the boundary condition \eqref{b Neu boundary}, it follows that
    \begin{align*}
     v=0 ,\quad& \text{on} \quad\overline{E_1F_1},\\
     v=-u\tan\theta_0 ,\quad& \text{on} \quad\overline{F_2E_2},
    \end{align*}
   which indicates
   \begin{equation}\label{for E1F1}
     \int_{\overline{E_1F_1}}v_{\bm{\tau}} f \hspace{1mm} ds=0
   \end{equation}
   and
    \begin{equation}
    \begin{split}
      \int_{\overline{F_2E_2}} 
   v_{\bm{\tau}} f \hspace{1mm} ds&= \int_{\overline{F_2E_2}} -u_{\bm{\tau}} f\tan\theta_0 \hspace{1mm} ds\\
   &= \int_{\overline{F_2E_2}} -f_{\bm{\tau}} f\tan\theta_0 \hspace{1mm} ds\\
   &= -\int_{\overline{F_2E_2}}\frac{1}{2}(f^2)_{\bm{\tau}} \tan\theta_0 \hspace{1mm} ds\\
   &= \frac{1}{2}(f^2(F_2)-f^2(E_2))\tan\theta_0=\frac{1}{2}f^2(F_2)\tan\theta_0,
     \end{split}
     \end{equation}
   where $f(E_2)=0$ due to the point $E_2$ on the boundary $\partial C_r(O)\cap \Omega_{sub}$. Notice that $D\phi=0$ at the origin $O$  by Lemma \ref{lemmaX20-2}. Thus, $f(F_2)$ converges to $-\delta$ as $\varepsilon\rightarrow 0$, and
   \begin{equation}\label{for F2E2}
     \int_{\overline{F_2E_2}} 
   v_{\bm{\tau}} f \hspace{1mm} ds\xrightarrow{\varepsilon\rightarrow 0}\frac{1}{2}\delta^2\tan\theta_0.
   \end{equation}

   According to \Cref{lemmaX20-2}, there must exist a nondecreasing continuous function $g(\varepsilon)$ on $[0,r_0)$, so that for any $\varepsilon\in (0,r_0)$,
    \begin{equation}\label{eq f(r)}
      \|D\phi\|_{C(C_\varepsilon(O)\cap\Omega_{sub})}\leq g(\varepsilon),
    \end{equation}
    and $g(\varepsilon)\rightarrow 0$ as $\varepsilon\rightarrow 0$. Then, using \eqref{eq f(r)} and \Cref{lemmaX20-1}, we known that
    \begin{equation}\label{condition 1}
      |D^2\phi|\leq C \dfrac{g(2\varepsilon)}{\varepsilon},\quad\text{on}\quad \partial C_\varepsilon(O)\cap\Omega_{sub},
    \end{equation}
   where $C$ is a constant depending on $\|D\phi\|_{C(C_{r_0}(O)\cap\Omega_{sub})}$. Besides, by \eqref{for h}, there is
    \begin{equation}\label{condition 2}
      |f|\leq ||D\phi||_{L^\infty(\Omega_{sub})}+\delta,\quad\text{in}\quad  \Omega_{sub}.
    \end{equation}
   Therefore, applying \eqref{condition 1}-\eqref{condition 2}, we obtain
   \begin{equation}\label{for circle arc}
    \int_{\partial C_\varepsilon(O)\cap\Omega_{sub}} 
   v_{\bm{\tau}} f \hspace{1mm} ds\leq 2\pi\varepsilon C \dfrac{g(2\varepsilon)}{\varepsilon}
\xrightarrow{\varepsilon\rightarrow 0}0.
   \end{equation}

  Using the above estimates \eqref{for E1F1}, \eqref{for F2E2} and \eqref{for circle arc}, we conclude that
   \begin{equation*}
   \textbf{N}\xrightarrow{\varepsilon\rightarrow 0}-\frac{1}{2}\delta^2\tan\theta_0<0,
   \end{equation*}
   which is a contradiction to \eqref{I}. Hence, the case $(i)$ of \Cref{thm: Main2} has proved.
   
\section{The case $c_1<u_1<c_0+c_1$}\label{sec:4}  
  In this section, we study the case $c_1<u_1<c_0+c_1$. Let us still use the notations defined in Section \ref{sec:2}. Obviously, there are two possible combinations for $L_{1m}:\xi=u_1-c_1$ and $L_{m0}:\xi=c_0$: $1R|2S$ or $1S|2S$. Besides, we point out here that the sonic circle $\mathcal{C}_0$ can always intersect with the boundary $\Gamma_{+}$ because $\theta_0$ is small.

  The analysis of the pressure wave structure is similar to the case $u_1>c_1$ in \cite[p. 680-683]{CQ12}. To avoid repetition, we only give the necessary instructions.

 \begin{figure}[H]
	\centering
	\begin{tikzpicture}[smooth, scale=0.5]
	\draw  [-latex] (11,3)--(12,3) node [right] {\footnotesize$\xi$};
        \draw  [-latex]  (11.5,2.5)--(11.5,3.5)node [above] {\footnotesize$\eta$};
        \draw  [dashed] (2.2,3)--(9.5,3);
        \draw [draw=gray, fill=gray, fill opacity=0.6](2.2,3)--(10,0.5)--(1.8,0.5)--(1.8,3)--(2.5,3);
       \draw [thick,domain=0:115] plot ({6.25+2.75*cos(\x)}, {3+2.75*sin(\x)});
       \draw [dashed,domain=115:180] plot ({6.25+2.75*cos(\x)}, {3+2.75*sin(\x)});
       \fill (6.25,3) circle (2pt);
       \node at (6.25,3) [below] {\footnotesize$O_m$};
       \draw [thick,domain=0:-21.4] plot ({3.5+5.5*cos(\x)}, {3+5.5*sin(\x)});
        \fill (9,3) circle (2pt);
        \node at (9,3) [below right] {\footnotesize$P_2$};
        \fill  (8.61,1.0) circle (2pt);
        \node at (8.61,1.1) [right] {\footnotesize$P_3$};
        \draw  (9,3)--(9,8);
        \node at (9,5) [right] {\footnotesize$L_{m0}$};
        \fill (2.2,3) circle (2pt);
        \node at (2.2,3) [below] {\footnotesize$O$};
        \draw  (3.5,4.7)--(3.5,8);
        \node at (3.5,6) [left] {\footnotesize$L_{1m}$};
        \fill (3.5,3) circle (2pt);
        \node at (3.7,3.15) [below] {\footnotesize$P_0$};
        \draw  [dashed] (3.5,4.7)--(3.5,3);
        \draw  (3.5,4.7)--(2.2,3);
        \fill (3.5,4.7) circle (2pt);
        \node at (3.5,4.7) [left] {\footnotesize$P$};
        \draw  (3.5,4.7)--(5,5.45);
        \draw  [dashed] (6.25,3)--(5,5.45);
        \fill (5,5.45) circle (2pt);
        \node at (5,5.35) [right] {\footnotesize$T_m$};
        \fill (4.42,5.88) circle (2pt);
        \node at (4.42,5.88) [left] {\footnotesize$T_1$};
        \fill (13,-0.4) circle (2pt);
        \node at (13,-0.4) [below] {\footnotesize$O_2$};
        \draw  [dashed] (13,-0.4)--(8.4,3);
        \draw  (13,-0.4)--(10,0.5);
        \draw  [dashed] (3.5,4.7)--(4.42,5.92);
        \draw  [dashed] (8.4,3)--(5,5.45)--(4.42,5.92);
        \draw [dashed,domain=110:180] plot ({8.4+4.9*cos(\x)}, {3+4.9*sin(\x)});
        \fill (8.4,3) circle (2pt);
        \node at (8.4,3) [below] {\footnotesize$O_1$};
       \draw (5,5.45)arc (110:170:2.82);
       \fill (6.8,1.9) circle (2pt);
       \draw  [dashed] (5,5.45)--(6.8,1.9);
       \node at (6.8,1.9) [below] {\footnotesize$O_3$};
       \draw  (3.5,4.7)--(3,2.7);
       \fill (3,2.73) circle (2pt);
       \node at (3,2.73) [below] {\footnotesize$P_1$};
       \fill (3.16,3.27) circle (2pt);
       \node at (3.3,3.3) [left] {\footnotesize$T_2$};
       \draw  [dashed] (3.16,3.27)--(13,-0.4);
	\end{tikzpicture}
	\caption{The case $c_1<u_1<c_0+c_1$.}
	\label{fg-pro3}
  \end{figure}
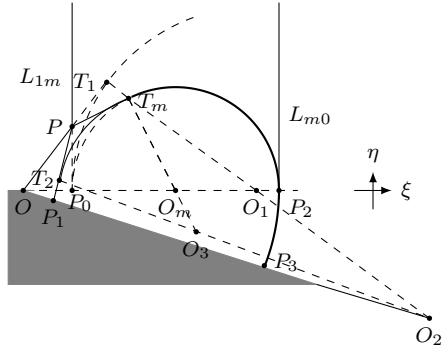

  Unlike the case $u_1<c_1$, a pressure wave, denoted by $L_{12}$ (i.e. ${OP}$ in \Cref{fg-pro3}), arises from the origin $O$ because the flow at the origin is surpersonic. Then, the interaction of $L_{1m}$ and $L_{12}$ occurs at a point $P$. From \Cref{lemmaCQU12-2}, it follows that the interaction of $L_{1m}$ and $L_{12}$ can be well defined, provided that $\theta_0$ is small. Since any pressure wave is a characteristic, the location of the resulting wave can be determined, i.e., ${PT_m}$ and ${PT_2}$ in \Cref{fg-pro3}. For reader’s convenience, we give the expression for the waves in \Cref{fg-pro3}, and omit the detailed calculation.
  \begin{alignat}{2}
	{OP}&:\quad \eta&&=\xi\tan\beta,\label{eq1:1.7}\\
	{PT_m}&:\quad
	\eta&&=(\xi-\xi_{P})\tan(\beta_m-\dfrac{\pi}{2})+\eta_{P},\label{eq1:1.8}\\
	{PT_2}&:\quad
	 \eta&&=(\xi-\xi_{P})\tan(\frac{\pi}{2}-(\beta_3-\beta_m))+\eta_{P}.\label{eq1:1.9}
  \end{alignat}
  Here $\beta=\arcsin{(c_1/u_1)}>0$; $(\xi_{P},\eta_{P})=(u_1-c_1,(u_1-c_1)\tan\beta)$ is the coordinate of the point $P$; $\beta_m=2\arctan{(c_m/\eta_{P})}$ is the angle formed by ${PT_m}$ and ${PP_0}$; $\beta_3=\angle T_1PT_2+\angle T_mPP_0-\angle T_1PP_0$. Denote by $U_2(u_2,v_2,c_2)$ and $U_3(u_3,v_3,c_3)$ the uniform states of the flow in the supersonic regions $OPP_1$ and $PP_1T_m$, respectively. Then, the sonic circles $\mathcal{C}_2$ and $\mathcal{C}_3$ can be determined by $U_2$ and $U_3$, respectively, in the same form as $\mathcal{C}_0$ and $\mathcal{C}_m$. The arcs $P_2P_3, P_2T_m, P_1T_2, T_2T_m$ are the part of the circles $\mathcal{C}_0, \mathcal{C}_m, \mathcal{C}_2, \mathcal{C}_3$, respectively. Moreover, the states $U_2$ and $U_3$ are
  \begin{flalign}\label{eq:U2}
  \left\{{
  \begin{array}{ll}
      u_2=u_1+(c_2-c_1)\sin\beta,\\
      v_2=(c_1-c_2)\cos\beta,\\
      c_2=c_1\cot\beta\tan(\beta+\theta_0).
  \end{array}}
  \right.,
  \left\{{
    \begin{array}{ll}
       u_3=\xi_{P}-c_3\cos(\frac{\pi}{2}+\beta_m-\frac{\beta_3}{2})\csc\frac{\beta_3}{2},\\
       v_3=\eta_{P}-c_3\sin(\frac{\pi}{2}+\beta_m-\frac{\beta_3}{2})\csc\frac{\beta_3}{2},\\
        c_3=\eta_{P}\tan\frac{\beta_3}{2}.
  \end{array}}
  \right.
  \end{flalign}

 Now the solution in supersonic regions has been determined; see \eqref{eq:Um} and \eqref{eq:U2}. The corresponding subsonic region $\Omega_{sub}$ is bounded by ${P_1T_2}$, ${T_2T_m}$, ${T_mP_2}$, ${P_2P_3}$ and $P_1P_3$ (see Figure \ref{fg-pro3}). Hence, Problem \ref{prob1} can be reduce to
 \begin{equation}\label{for u1>c1}
     \begin{cases}
     \text{Equation} \eqref{eq:varphi}\quad&\text{in}\quad \Omega_{sub},\\
     \varphi=B_0\quad&\text{on}\quad {P_1T_2}\cup{T_2T_m}\cup{T_mP_2}\cup{P_2P_3},\\
     D\varphi\cdot \bm{\nu}=0\quad&\text{on}\quad P_1P_3, 
     \end{cases}
 \end{equation}
 where $\bm{\nu}$ is the exterior unit normal to $P_1P_3$. Since \eqref{eq:varphi} is rotation-invariant and of reflection symmetry with respect to the axes, we can reflect the region $\Omega_{sub}$ with respect to the boundary $P_1P_3$ to get a region $\Omega^{ref}_{sub}$, and reduce the problem \eqref{for u1>c1} to a Dirichlet problem:
 \begin{equation}\label{for u1>c1 dir}
     \begin{cases}
     \text{Equation} \eqref{eq:varphi}\quad&\text{in}\quad \Omega^{ref}_{sub},\\
     \varphi=B_0\quad&\text{on}\quad \partial\Omega^{ref}_{sub}.\\
     \end{cases}
 \end{equation}
The existence and regularity of solution to \eqref{for u1>c1 dir} can be obtained by applying the result of \cite[Theorem 6.1]{Serre09}. 
 
 From the above discussion, the case $(ii)$ of Theorem \ref{thm: Main2} has been proved.

\section{shock diffraction by a convex cornered wedge}\label{sec:5} 
  We consider the initial data \eqref{initial condition} separated by a vertical shock above the wedge \eqref{wedge} (see \Cref{fg-shock diffraction}). The shock diffraction occurs when the shock passes through the wedge along the positive $x$-axis. This problem can be described mathematically as follows.

  \begin{problem}\label{pro-shock diffraction}
   We wish seek a solution of system \eqref{eq:Euler system}--\eqref{eq:Chaplygin gas} above the wedge \eqref{wedge} with the initial data \eqref{initial condition} satisfying $u_1>0$, and the slip boundary condition \eqref{slip condition}.
  \end{problem}
  
  \begin{figure}[H]
	\centering
	\begin{tikzpicture}[smooth, scale=0.7]
	\draw  [-latex] (8,3)--(9,3) node [right] {\footnotesize$x$};
        \draw  [-latex]  (8.5,2.5)--(8.5,3.5)node [above] {\footnotesize$y$};
        \draw  (4.5,3)--(4.5,6);
        \node at (4.5,6)[above]{\footnotesize Shock};
        \node at (4.5,3) [below] {\footnotesize$O$};
        \node at (3.5,2) {$W$};
        \draw  [dashed] (4.5,3)--(6.5,3);
	\draw  (4.5,3)--(8,1);
        \draw [draw=gray, fill=gray, fill opacity=0.6](4.5,3)--(8,1)--(1,1)--(1,3)--(4.5,3);
        \node at (2.5,4.5){\footnotesize$U_1=(u_1>0,0,c_1)$};
        \node at (6.5,4.5){\footnotesize$U_0=(0,0,c_0)$};
        \draw (5,2.75)arc (315:370:0.3);
        \node at (5.75,2.75){\footnotesize$\theta_0>0$};
	\end{tikzpicture}
	\caption{The problem of shock diffraction.} 
	\label{fg-shock diffraction}
  \end{figure}
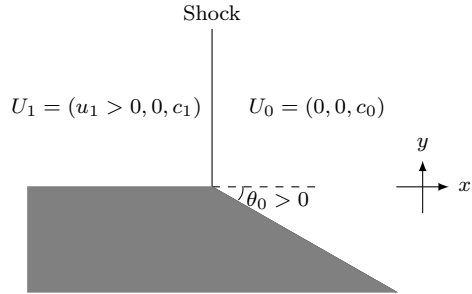

From \eqref{eq:Um}, it follows that if the initial data satisfy $c_0=u_1+c_1$ and $u_1>0$, then the intermediate state $U_m$ coincides with the initial state $U_1$, and \Cref{prob1} can be reduced to \Cref{pro-shock diffraction}. Then, we can determine the pressure wave structure as shown in \Cref{shock diffraction 1,shock diffraction 2}, which are simplified from \Cref{fg-pro2} and \Cref{fg-pro3}, respectively. Also, note here that for the case $c_1<u_1<c_0+c_1$, as the intermediate state $U_m$ vanishes, the points $P,T_1,T_2$ coincide with $T_m$, and the center $O_1$ coincides with $O_m$, and the state $U_3$ coincides with $U_2$.

\begin{figure}[H]
	\centering
	\hspace{0.45cm}
	\subfigure[The case $u_1<c_1$.]{
		\begin{tikzpicture}[smooth, scale=0.53]
        \draw  [dashed] (4.5,3)--(8.5,3);
        \draw  [dashed] (8,3) arc (0:180:2.5);
        \draw  (7.36,1.38) arc (315:360:2.2);
        \fill (7.36,1.38) circle (2pt);
        \node at (7.4,1.38) [right] {\footnotesize$P_3$};
	  \draw  (4.5,3)--(8,1);
        \draw [draw=gray, fill=gray, fill opacity=0.6](4.5,3)--(8,1)--(1,1)--(1,3)--(4.5,3);
        \draw (5,2.75)arc (315:370:0.3);
        \fill (5.5,3) circle (2pt);
        \node at (5.5,3) [above] {\footnotesize$O_m$};
        \fill (4.5,3) circle (2pt);
        \node at (4.5,3) [below] {\footnotesize$O$};
        \fill (3,3) circle (2pt);
        \node at (3,3) [below] {\footnotesize$P_1$};
        \fill (8,3) circle (2pt);
        \node at (8,3) [below right] {\footnotesize$P_2$};
        \draw  (8,3)--(8,8.5);
        \node at (8,5) [right] {\footnotesize$L_{m0}$};
        \node at (5,5) [right] {\footnotesize$\Gamma_{sonic}$};
        \node at (8,2) [right] {\footnotesize$\Gamma_{shock}$};
		\end{tikzpicture}
		\label{shock diffraction 1}
	}
	\subfigure[The case $c_1<u_1<c_0+c_1$.]{
		\begin{tikzpicture}[smooth, scale=0.53]
        \draw  [dashed] (2.2,3)--(9.5,3);
        \draw [draw=gray, fill=gray, fill opacity=0.6](2.2,3)--(10,0.5)--(1.8,0.5)--(1.8,3)--(2.5,3);
       \draw [thick,domain=0:133] plot ({6.25+2.75*cos(\x)}, {3+2.75*sin(\x)});
       \fill (6.25,3) circle (2pt);
       \node at (6.25,3) [below] {\footnotesize$O_m(O_1)$};
       \draw [thick,domain=0:-21.4] plot ({3.5+5.5*cos(\x)}, {3+5.5*sin(\x)});
        \fill (9,3) circle (2pt);
        \node at (9,3) [below right] {\footnotesize$P_2$};
        \fill  (8.61,1.0) circle (2pt);
        \node at (8.61,1.1) [right] {\footnotesize$P_3$};
        \draw  (9,3)--(9,8);
        \node at (9,5) [right] {\footnotesize$L_{m0}$};
        \fill (2.2,3) circle (2pt);
        \node at (2.2,3) [below] {\footnotesize$O$};
        \draw  (4.35,5)--(2.2,3);
        \fill (4.35,5) circle (2pt);
        \node at (4.35,5) [left] {\footnotesize$T_m$};
        \draw  [dashed] (4.35,5)--(8,1.15);
        \fill (8,1.15) circle (2pt);
        \node at (8.1,1.1) [above] {\footnotesize$O_2$};
       \draw (4.35,5)arc (133:160:5.7);
       \fill (2.85,2.76) circle (2pt);
       \node at (2.85,2.76) [below] {\footnotesize$P_1$};
		\end{tikzpicture}
		\label{shock diffraction 2}
	}	
	\caption{The problem of shock diffraction.}
\end{figure}
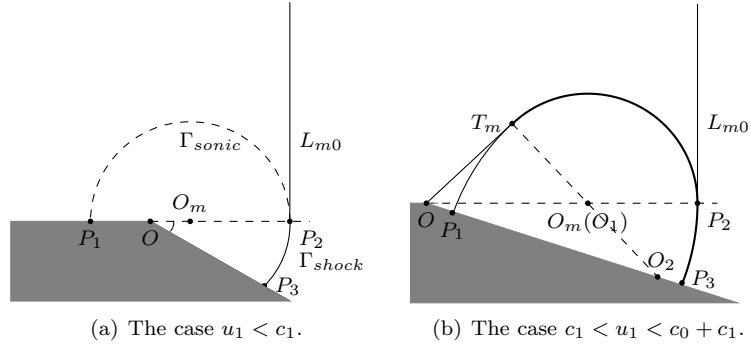

Applying the discussion in \Cref{sec:3,sec:4}, we have
\begin{theorem}\label{thm: shock diffraction}
For \Cref{pro-shock diffraction}, we have the following conclusion:

$(i)$ if $u_1<c_1$, then there does not permit a global Lipschitz solution similar with \Cref{prob2} under the assumption that the flow is continuous across the sonic boundary $\Gamma_{sonic}$.

$(ii)$ if $c_1<u_1<c_0+c_1$ and $\theta_0<\arcsin({c_1/u_1})$, then \Cref{pro-shock diffraction} exists a piecewise smooth solution.
\end{theorem}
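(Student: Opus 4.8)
The plan is to obtain both conclusions as a degenerate specialization of \Cref{thm: Main2}, exploiting the observation recorded just before the statement: when the initial data satisfy $c_0=u_1+c_1$ with $u_1>0$, formula \eqref{eq:Um} forces $u_m=u_1$, $v_m=0$, $c_m=c_1$, so that $U_m$ coincides with $U_1$. Consequently the wave $L_{1m}$ and the intermediate region $\Omega_m$ disappear, and the pressure-wave configurations of \Cref{sec:3,sec:4} collapse to the simplified pictures in \Cref{shock diffraction 1,shock diffraction 2}. The two assertions then follow by transplanting, essentially verbatim, the arguments established in those two sections to the collapsed geometry.

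For part $(i)$, I would first record that under $U_m=U_1$ the subsonic region $\Omega_{sub}$ is bounded by the sonic arc $\Gamma_{sonic}=P_1P_2$ (now a genuine nondegenerate arc of the single sonic circle $\mathcal{C}_1$ centered at $(u_1,0)$), the shock arc $\Gamma_{shock}=P_2P_3$, and the straight sides $\Gamma_+,\Gamma_-$ meeting at the corner $O$ with interior angle larger than $\pi$. Since this is exactly the structure underlying \Cref{prob2}, the a-priori estimates \Cref{lemmaX20-1,lemmaX20-2}, the sign lemma \Cref{for velocity u}, and \Cref{key point} all apply without modification; in particular $u=\phi_\xi>0$ in $\Omega_{sub}\cup\Gamma_+\cup\Gamma_-$ while $D\phi=\bm{0}$ at $O$. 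The contradiction argument of \Cref{sec:3.2}---forming the nonnegative integral $\mathbf{N}$, integrating by parts, and letting the inner radius $\varepsilon\to 0$---then produces the same limit $-\frac{1}{2}\delta^2\tan\theta_0<0$, contradicting $\mathbf{N}\geq 0$. Hence no global Lipschitz solution exists.

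For part $(ii)$, the collapse $U_m=U_1$ merges the interaction point $P$ and the pair $T_1,T_2$ with $T_m$, identifies the center $O_1$ with $O_m$ and the state $U_3$ with $U_2$; thus only the single characteristic $OT_m$ issues from the corner, and the supersonic states are read off from \eqref{eq:U2} after this identification. The smallness hypothesis ``$\theta_0$ small'' of \Cref{sec:4} now takes the explicit form $\theta_0<\arcsin(c_1/u_1)=\beta$, which guarantees that $\mathcal{C}_0$ still meets $\Gamma_+$ and that the (now degenerate) wave interaction is well defined in the sense of \Cref{lemmaCQU12-2}. With the supersonic data fixed, the subsonic problem reduces, after reflection across the straight boundary, to the Dirichlet problem \eqref{for u1>c1 dir}, whose solvability and regularity follow from \cite[Theorem 6.1]{Serre09}. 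This yields the desired piecewise smooth solution.

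The main obstacle I anticipate lies not in the transplanted analysis but in verifying that the limiting configuration $U_m=U_1$ remains admissible: one must check that $\Gamma_{sonic}$ does not degenerate to a point or a straight segment, that the corner at $O$ still has opening exceeding $\pi$ (so that the concave-corner obstruction driving part $(i)$ genuinely persists), and that the explicit threshold $\theta_0<\arcsin(c_1/u_1)$ is precisely the condition making the simplified supersonic construction of part $(ii)$ geometrically consistent. Once these degeneracy checks are in place, both conclusions are immediate consequences of the constructions in \Cref{sec:3,sec:4}.
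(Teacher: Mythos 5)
Your proposal coincides with the paper's own proof: \Cref{sec:5} obtains \Cref{thm: shock diffraction} exactly by specializing to $c_0=u_1+c_1$ with $u_1>0$ so that $U_m=U_1$, collapsing the configurations of \Cref{fg-pro2} and \Cref{fg-pro3} to \Cref{shock diffraction 1,shock diffraction 2}, and re-running the arguments of \Cref{sec:3,sec:4}. The only cosmetic difference is the reading of the threshold $\theta_0<\arcsin(c_1/u_1)$: the paper notes that, since no wave interaction survives the collapse, this bound is needed only to guarantee that the shock $P_2P_3$ meets the wedge, not to invoke \Cref{lemmaCQU12-2}.
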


We remark here that for the above case $c_1<u_1<c_0+c_1$, since there is no intersection of different waves, we only need $\theta_0<\arcsin({c_1/u_1})$ to ensure that the shock $P_2P_3$ can intersect with the wedge.

\appendix
\section{The critical case $u_1=c_1$}\label{Appendix1}
Compared to the case $u_1<c_1$, the boundary $\Gamma_{+}$ (i.e., ${OP_1}$ in \Cref{fg-pro2}) shrinks to the origin $O$ when $u_1=c_1$. Then, the pressure wave structure is shown in \Cref{fg-pro4}. 

   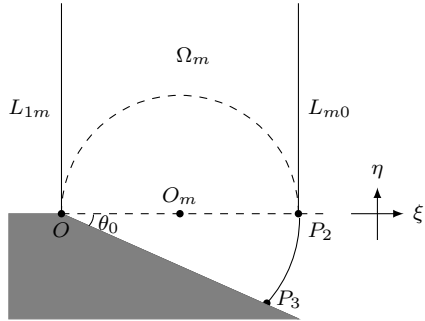
\begin{figure}[H]
	\centering
	\begin{tikzpicture}[smooth, scale=0.7]
	\draw  [-latex] (9,3)--(10,3) node [right] {\footnotesize$\xi$};
        \draw  [-latex]  (9.5,2.5)--(9.5,3.5)node [above] {\footnotesize$\eta$};
        \draw  [dashed] (3.5,3)--(8.5,3);
        \draw  [dashed] (8,3) arc (0:180:2.25);
        \draw  (7.4,1.3) arc (318:360:2.42);
        \fill  (7.4,1.3) circle (2pt);
        \node at (7.4,1.38) [right] {\footnotesize$P_3$};
	  \draw  (3.5,3)--(8,1);
        \draw [draw=gray, fill=gray, fill opacity=0.6](3.5,3)--(8,1)--(2.5,1)--(2.5,3)--(3.5,3);
        \draw (4.02,2.77)arc (315:360:0.3);
        \node at (4.4,2.8){\footnotesize$\theta_0$};
        \fill (5.75,3) circle (2pt);
        \node at (5.75,3) [above] {\footnotesize$O_m$};
        \fill (3.5,3) circle (2pt);
        \node at (3.5,3) [below] {\footnotesize$O$};
        \draw  (3.5,3)--(3.5,7);
        \node at (3.5,5) [left] {\footnotesize$L_{1m}$};
        \fill (8,3) circle (2pt);
        \node at (8,3) [below right] {\footnotesize$P_2$};
        \draw  (8,3)--(8,7);
        \node at (8,5) [right] {\footnotesize$L_{m0}$};
        \node at (5.5,6) [right] {\footnotesize$\Omega_m$};
	\end{tikzpicture}
	\caption{The case $u_1=c_1$.}
	\label{fg-pro4}
  \end{figure}

 By abuse of notations, we continue to write $\Gamma_{sonic}$, $\Gamma_{shock}$ and $\Gamma_{+}$, respectively, for the arcs $OP_2$, $P_2P_3$ and the line $OP_3$. Also, let $\Omega_{sub}$ be the subsonic bounded by $\Gamma_{sonic}$, $\Gamma_{shock}$ and $\Gamma_{+}$. Then Problem \ref{prob1} can be reduced to a boundary value problem for $\varphi$ as follows:
 \begin{equation}\label{for u1=c1}
     \begin{cases}
     \text{Equation} \eqref{eq:varphi}\quad&\text{in}\quad \Omega_{sub},\\
     \varphi=B_0\quad&\text{on}\quad \Gamma_{sonic}\cup\Gamma_{shock},\\
     D\varphi\cdot \bm{\nu}=0\quad&\text{on}\quad \Gamma_{+}. 
     \end{cases}
 \end{equation}
 Thanks to the feature of equation \eqref{eq:varphi}, we can reflect the region $\Omega_{sub}$ with respect to $\Gamma_{+}$ to get a region $\Omega^{ref}_{sub}$. Obviously, the boundary of $\Omega^{ref}_{sub}$ is non-convex. Hence, the existence of the solution in $\Omega^{ref}_{sub}$ is hard to obtain. In fact, the angle of $O$ formed by $\Gamma_{+}$ and $L_{1m}$ is $\frac{\pi}{2}+\theta_0$. Then, by the specular reflection, it is equivalent to a corner with angle $\pi+2\theta_0$ on the boundary of $\Omega^{ref}_{sub}$. It follows from this fact and the result in \cite{Grisvard85} that we can not expect to establish a Lipschitz estimate of $\varphi$ on the boundary, which indicates that the method used in \cite{Serre09} is not valid here.

\section*{Declarations} 
\textbf{Dual Publication:} The manuscript was done by Long alone. The results in this manuscript have not been published elsewhere.

\vspace{2mm}

\textbf{Funding:} This work was supported in part by National Natural Science Foundation of Hubei Province of China (Grant number 2024AFB007) and Science and Technology Research Project of Education Department of Hubei Province (Grant number D20232901).



\vspace{2mm}

\textbf{Conflict of interest:} The author has no relevant financial or non-financial interests to disclose.



\end{document}